\newtheorem{definition}{Definition} [section]
\newtheorem{theorem}[definition]{Theorem}
\newtheorem{lemma}[definition]{Lemma}
\newtheorem{conjecture}[definition]{Conjecture}
\newtheorem{claim}[definition]{Claim}
\def\lf{\left\lfloor}
\def\rf{\right\rfloor}
\begin{document}

\title{\bf\Large Structural results for conditionally intersecting families and some applications}

\date{\today}

\author{Xizhi Liu \thanks{Department of Mathematics, Statistics, and Computer Science, University of Illinois, Chicago, IL, 60607 USA.\ Email: xliu246@uic.edu}}

\maketitle

\begin{abstract}
Let $k\ge d\ge 3$ be fixed.
Let $\mathcal{F}$ be a $k$-uniform family on $[n]$.
Then $\mathcal{F}$ is $(d,s)$-conditionally intersecting
if it does not contain $d$ sets with union of size at most $s$ and empty intersection.
Answering a question of Frankl, we present some structural results for families that are $(d,s)$-conditionally intersecting with $s\ge 2k+d-3$,
and families that are $(k,2k)$-conditionally intersecting.
As applications of our structural results we present some new proofs to the upper bounds for
the size of the following $k$-uniform families on $[n]$.
\begin{enumerate}[label=(\alph*)]
\item $(d,2k+d-3)$-conditionally intersecting families with $n\ge 3k^5$.
\item $(k,2k)$-conditionally intersecting families with $n\ge k^2/(k-1)$.
\item Nonintersecting $(3,2k)$-conditionally intersecting families with $n\ge 3k\binom{2k}{k}$.
\end{enumerate}
Our results for $(c)$ confirms a conjecture of Mammoliti and Britz for the case $d=3$.
\end{abstract}

\section{Introduction}
Let $V$ be a set, and let $S, T$ be two subsets of $V$.
Then we use $S - T$ to denote the set $S\setminus T$, and use $\binom{V}{k}$ to denote the collection of all $k$-subsets of $V$.
Let $[n]$ denote the set $\{1,\ldots,n\}$.
A \textit{$d$-cluster} of $k$-sets is a collection of $d$ different $k$-subsets $A_1,\ldots,A_d$ of $[n]$ such that
\begin{align}
|A_1\cup \cdots \cup A_d|\le 2k, \quad {\rm and} \quad |A_1\cap \cdots \cap A_d|=0. \notag
\end{align}
Let $\mathcal{F}$ be a $k$-uniform family on $[n]$.
Then $\mathcal{F}$ is \textit{$(d,s)$-conditionally intersecting} if it does not contain
$d$ sets with union of size at most $s$ and empty intersection. In particular, a family $\mathcal{F}$ is
$(d,2k)$-conditionally intersecting if it does not contain $d$-clusters.
We use $h(n,k,d,s)$ to denote the maximum size of a $(d,s)$-conditionally intersecting family $\mathcal{F}$.

Note that a $k$-uniform family is $(2,2k)$-conditionally intersecting if and only if it is intersecting.
The celebrated Erd\H{o}s-Ko-Rado theorem \cite{EKR} states that $h(n,k,2,2k)\le \binom{n-1}{k-1}$ for all $n\ge 2k$,
and when $n>2k$ equality holds only if $\mathcal{F}$ is a \textit{star}, i.e. a collection of $k$-sets that contain a fixed vertex.
In \cite{frankl1976d-wise}, Frankl showed that the same conclusion holds for $n\ge dk/(d-1)$ when the intersecting condition
is replaced by the \textit{$d$-wise intersecting} condition, i.e. every $d$ sets of $\mathcal{F}$ have nonempty intersection.

\begin{theorem}[Frankl, \cite{frankl1976d-wise}]\label{thm:Frankl-d-wise-intersecting}
Let $k\ge d\ge 3$ be fixed and $n\ge dk/(d-1)$. If $\mathcal{F}\subset\binom{[n]}{k}$ is a $d$-wise intersecting
family, then $|\mathcal{F}|\le\binom{n-1}{k-1}$, with equality only if $\mathcal{F}$ is a star.
\end{theorem}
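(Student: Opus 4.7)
My plan is to prove Theorem~\ref{thm:Frankl-d-wise-intersecting} by the shifting method. The first step applies the standard shift operators $S_{ij}$ for $1\le i<j\le n$: each preserves $|\mathcal{F}|$, and by the usual case analysis (tracking, among the $d$ sets in any prospective empty-intersection witness, which were genuinely shifted and which were not) it also preserves the $d$-wise intersecting condition. Hence we may assume $\mathcal{F}$ is left-compressed. Write $\mathcal{F}=\mathcal{F}_1\sqcup\mathcal{F}_0$ according as $1\in A$ or $1\notin A$. Since $|\mathcal{F}_1|\le\binom{n-1}{k-1}$ with equality exactly when $\mathcal{F}_1$ is the full star at $1$, the whole theorem reduces to the structural claim that $\mathcal{F}_0=\emptyset$.

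Suppose, toward a contradiction, that $\mathcal{F}_0\neq\emptyset$. By shiftedness, the lex-minimum $k$-set avoiding $1$, namely $A_0=\{2,3,\ldots,k+1\}$, must lie in $\mathcal{F}$. I would try to expose a $d$-tuple in $\mathcal{F}$ with empty intersection as follows: partition $A_0$ into $d-1$ blocks $P_1,\ldots,P_{d-1}$, each of size at most $\lceil k/(d-1)\rceil$, and locate $A_i\in\mathcal{F}$ with $A_i\cap P_i=\emptyset$ for each $i$. Then each element of $A_0$ is absent from at least one $A_i$, giving
\[
A_0\cap A_1\cap\cdots\cap A_{d-1}\subseteq A_0\setminus(P_1\cup\cdots\cup P_{d-1})=\emptyset,
\]
contradicting the $d$-wise intersecting hypothesis. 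The assumption $n\ge dk/(d-1)$ enters precisely here, since it yields $n-k\ge k/(d-1)$, which is exactly the room outside $A_0$ needed to fit ``replacements'' $Q_i\subseteq [n]\setminus A_0$ for each block.

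The main obstacle is producing the witnesses $A_i\in\mathcal{F}$, because shiftedness only generates new sets by \emph{lowering} indices while the desired $A_i=(A_0\setminus P_i)\cup Q_i$ is (typically) lex-larger than $A_0$. My first move would be to use shiftedness applied to $A_0$ to place every set of the form $(A_0\setminus\{j\})\cup\{1\}$ (with $j\in A_0$) into $\mathcal{F}_1$, giving a rich scaffolding of sets near $A_0$, and then bootstrap: combining these with further shift-closures I try to realise each $A_i$ inside $\mathcal{F}$. If this combinatorial construction proves too delicate, a backup plan is induction on $k$, passing to a link of $\mathcal{F}$ at a well-chosen vertex to reduce to a $(k-1)$-uniform family on $n-1$ vertices satisfying a suitable intersection condition; one then checks that $n-1\ge d(k-1)/(d-1)$ is implied by $n\ge dk/(d-1)$, so the inductive threshold is met. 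A final reverse-shifting argument transfers the star-uniqueness from the shifted family back to the original $\mathcal{F}$ to conclude the equality case.
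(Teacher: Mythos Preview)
The paper does not prove Theorem~\ref{thm:Frankl-d-wise-intersecting}; it is quoted as a known result of Frankl and used only as background, so there is no ``paper's own proof'' to compare against. I can only assess your proposal on its own merits.

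Your reduction has a genuine error. You assert that after shifting, proving the bound (and its equality case) ``reduces to the structural claim that $\mathcal{F}_0=\emptyset$'', and you then try to derive a contradiction from $\mathcal{F}_0\neq\emptyset$. But this structural claim is simply false for shifted $d$-wise intersecting families. Take $k\ge d\ge 3$, $n\ge dk/(d-1)$, and $\mathcal{F}=\binom{[k+1]}{k}$. This family is shifted; any $d$ of its members meet in a set of size $k+1-d\ge 1$, so it is $d$-wise intersecting; and $A_0=\{2,\ldots,k+1\}\in\mathcal{F}_0$. No contradiction is available: there are no $d$ members of $\mathcal{F}$ with empty intersection. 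This is exactly why your attempt to manufacture the witnesses $A_i$ stalls --- the sets you would need are not in $\mathcal{F}$, and the ``bootstrap'' from the scaffolding $(A_0\setminus\{j\})\cup\{1\}$ cannot help, since those are already all of $\binom{[k+1]}{k}$ and any $d\le k$ of them intersect.

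Your backup plan (induction via the link at a vertex) also fails as stated: the link $\{A\setminus\{v\}:v\in A\in\mathcal{F}\}$ need not be $d$-wise intersecting, because $\bigcap_i (A_i\cup\{v\})\ni v$ is automatic and tells you nothing about $\bigcap_i A_i$. A correct proof cannot aim for $\mathcal{F}_0=\emptyset$; it must instead bound $|\mathcal{F}_0|+|\mathcal{F}_1|$ jointly, for instance by showing that each $A\in\mathcal{F}_0$ forces many $k$-sets through $1$ to be absent from $\mathcal{F}_1$ (an injection or exchange argument), or by the Katona--style cyclic permutation averaging that underlies Frankl's original 1976 paper.
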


Later, Frankl and F\"{u}redi \cite{frankl1983new} extended Theorem \ref{thm:Frankl-d-wise-intersecting} and  proved that
$h(n,k,3,2k)\le \binom{n-1}{k-1}$ for all $n\ge k^2+3k$, and they conjectured that the same inequality holds for all $n\ge 3k/2$.
In \cite{mubayi2006erdos}, Mubayi settled their conjecture and posed the following more general conjecture.

\begin{conjecture}[Mubayi, \cite{mubayi2006erdos}]\label{conj:Mubayi-d-cluster}
Let $k\ge d \ge 3$ and $n\ge dk/(d-1)$. Suppose that $\mathcal{F}\subset \binom{[n]}{k}$ is a $(d,2k)$-conditional intersecting family.
Then $|\mathcal{F}|\le\binom{n-1}{k-1}$, with equality only if $\mathcal{F}$ is a star.
\end{conjecture}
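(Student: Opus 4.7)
The plan is to combine the shifting technique with a dichotomy on whether $\mathcal{F}$ is $d$-wise intersecting. First I would apply the standard left-shift operations $S_{ij}$ and verify that they preserve the $(d,2k)$-conditional intersecting property (a standard but nontrivial check, requiring case analysis on which sets are fixed by $S_{ij}$ since unions and intersections transform in a controlled way under shifting). This reduces the problem to the case where $\mathcal{F}$ is left-compressed, which permits sharper structural reasoning.

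Next I would dichotomize. If $\mathcal{F}$ is in fact $d$-wise intersecting, then since $n \ge dk/(d-1)$, Theorem \ref{thm:Frankl-d-wise-intersecting} of Frankl applies directly and yields both the bound $|\mathcal{F}|\le\binom{n-1}{k-1}$ and the star characterization. Otherwise there exist $A_1,\ldots,A_d\in\mathcal{F}$ with $\bigcap_{i=1}^d A_i=\emptyset$, and the conditional intersecting hypothesis forces $\left|\bigcup_{i=1}^d A_i\right|\ge 2k+1$. The remaining task is to show the strict inequality $|\mathcal{F}|<\binom{n-1}{k-1}$ in this case.

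For that second case I would use a link/co-link decomposition $\mathcal{F}=\mathcal{F}_v\cup\mathcal{F}_{\bar v}$ around a vertex $v$ of maximum degree, and leverage the witness $A_1,\ldots,A_d$ together with shiftedness to constrain $\mathcal{F}_{\bar v}$. Every $B\in\mathcal{F}_{\bar v}$ must avoid forming a $d$-cluster with any $d-1$ of the $A_i$, and under left-compression this restricts $B$ to a thin, structured sub-collection. Combining such restrictions with the paper's structural results for $(d,s)$-conditional intersecting families applied inductively to appropriate subfamilies should give the required strict bound via a careful comparison to the co-link of a star.

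The main obstacle lies precisely in the gap between $s=2k$ (the conjectured regime) and $s \ge 2k+d-3$ (where the paper's structural results yield their cleanest output). For $d=3$ these coincide, matching Mubayi's resolution of the Frankl--F\"uredi conjecture. For $d\ge 4$, however, the slack $d-3$ in the union is sacrificed, and the witness $\bigcup A_i$ can be as large as $dk$, which weakens any direct counting or link estimate. Closing this gap most likely requires either a stability or junta-type refinement showing that any near-extremal $(d,2k)$-conditional intersecting family is automatically $d$-wise intersecting, thereby reducing the non-intersecting case to a strictly suboptimal count, after which Frankl's theorem finishes the proof.
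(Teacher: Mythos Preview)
The paper does not prove this statement: it is presented as Conjecture~\ref{conj:Mubayi-d-cluster}, attributed to Mubayi, and the surrounding text records that it was ultimately resolved by Currier~\cite{GC18}. There is therefore no ``paper's own proof'' to compare against; the paper's contributions (Theorems~\ref{thm:d-2k+d-3-structure}--\ref{thm:upper-bound-nonintersecting-3-2k}) address related but strictly different hypotheses (union bound $2k+d-3$ rather than $2k$, or $d=k$, or the non-intersecting $d=3$ case).

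Your proposal is not a proof either, and you say so yourself: the final paragraph concedes that the gap between $s=2k$ and $s=2k+d-3$ is unresolved for $d\ge 4$ and that closing it ``most likely requires'' an additional stability or junta argument you do not supply. Two further issues make the outline weaker than it may appear. First, the claim that left-shifting preserves the $(d,2k)$-conditional intersecting property is not a routine check: shifting can decrease the size of a union, so a family of $d$ sets with empty intersection and $|\bigcup A_i|>2k$ may, after shifting, acquire $|\bigcup A_i|\le 2k$ and become a forbidden $d$-cluster. This is in fact one of the known obstacles in the $d$-cluster problem, and it is why Mubayi's original proof for $d=3$ and Currier's eventual proof do not proceed via shifting in the na\"{\i}ve way. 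Second, invoking ``the paper's structural results'' inductively cannot help, because Theorem~\ref{thm:d-2k+d-3-structure} requires $s\ge 2k+d-3$, which is exactly the hypothesis you lack.

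In short: there is nothing in the paper to compare against, and your sketch has a genuine gap at the shifting step and another at the structural step, both of which you partially acknowledge.
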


Conjecture~\ref{conj:Mubayi-d-cluster} has been intensively studied in the past decade.
Mubayi \cite{mubayiintersection4} proved this conjecture for the case $d=4$ with $n$ sufficiently large.
Later, Mubayi and Ramadurai \cite{mubayi2009set}, and independently, F\"{u}redi and \"{O}zkahya \cite{furedi2011cluster}
settled this conjecture for all $d\ge 3$ with $n$ sufficiently large.
In \cite{chen2009cluster}, Chen, Liu and Wang confirmed this conjecture for the case $d=k$,
and they also showed that $h(n,k,d,(d+1)k/2)\le \binom{n-1}{k-1}$ for all $n\ge dk/(d-1)$.
Very recently, Conjecture \ref{conj:Mubayi-d-cluster}  was completely solved by Currier \cite{GC18}.

In this paper, we consider the structure of conditionally intersecting families,
which is motivated by a structural theorem for $(3,6)$-conditionally intersecting family proved by Frankl \cite{franklstructural}.

\begin{definition}
Let $\mathcal{H}\subset 2^{[n]}$, and let $H\in \mathcal{H}$.
A subset $G\subset H$ is called unique
if there is no other set in $\mathcal{H}$ containing $G$.
\end{definition}

The following result of Bollob\'{a}s  \cite{bollobasgeneralized} gives an upper bound for the size of
a family in which every set has a unique subset.

\begin{theorem}[Bollob\'{a}s, \cite{bollobasgeneralized}]\label{thm:Bollobas-unique-subset}
Suppose that for every member H of the family $\mathcal{H}\subset 2^{[n]}$ the set $G(H)\subset H$
is a unique subset. Then
\begin{align}
\sum_{H\in\mathcal{H}}\frac{1}{\binom{n-|H-G(H)|}{|G(H)|}}\le 1. \notag
\end{align}
\end{theorem}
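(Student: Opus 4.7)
The plan is to reduce the statement to the standard random-permutation argument that underlies Bollobás's set-pair inequality. For each $H \in \mathcal{H}$, set $A_H = G(H)$ and $B_H = [n] - H$. These satisfy $A_H \cap B_H = \emptyset$ and
$$|A_H|+|B_H| \;=\; |G(H)| + n - |H| \;=\; n - |H - G(H)|,$$
so the quantity $\binom{n-|H-G(H)|}{|G(H)|}$ appearing in the inequality is exactly $\binom{|A_H|+|B_H|}{|A_H|}$. The uniqueness hypothesis translates into a crosscut condition: for any two distinct members $H, H' \in \mathcal{H}$, the set $G(H)$ is not contained in $H'$, so $G(H)$ meets $[n]-H'$, i.e., $A_H \cap B_{H'} \neq \emptyset$ (and symmetrically $A_{H'} \cap B_H \neq \emptyset$).

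Next, I would draw a uniformly random permutation $\pi$ of $[n]$, and let $E_H$ be the event that every element of $A_H$ precedes every element of $B_H$ under $\pi$. Since only the relative order of elements in $A_H \cup B_H$ matters, the favorable orderings are those in which $A_H$ occupies the first $|A_H|$ positions among these $|A_H|+|B_H|$ elements, giving
$$\Pr[E_H] \;=\; \frac{|A_H|!\,|B_H|!}{(|A_H|+|B_H|)!} \;=\; \binom{n - |H - G(H)|}{|G(H)|}^{-1}.$$

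The main step is to verify that the events $\{E_H\}_{H\in\mathcal{H}}$ are pairwise disjoint. Given distinct $H, H' \in \mathcal{H}$, pick $x \in A_H \cap B_{H'}$ and $y \in A_{H'} \cap B_H$ using the uniqueness observation above. If both $E_H$ and $E_{H'}$ occurred, then $E_H$ would force $x$ to precede $y$ (since $x \in A_H$, $y \in B_H$), while $E_{H'}$ would force $y$ to precede $x$, a contradiction. Summing probabilities over the disjoint events $E_H$ yields $\sum_{H\in\mathcal{H}}\Pr[E_H] \le 1$, which is precisely the claimed inequality.

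I do not expect any serious obstacle here; the only delicate point is making sure that $G(H)\not\subset H'$ whenever $H\ne H'$, which is immediate from the definition of a unique subset. The rest is just the elementary probability computation and the pigeonhole observation that two simultaneous orderings would contradict each other.
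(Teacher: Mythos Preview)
Your argument is correct: the reduction to set pairs $(A_H,B_H)=(G(H),[n]-H)$ together with the uniqueness hypothesis gives exactly the cross-intersection condition $A_H\cap B_{H'}\neq\emptyset$ for $H\neq H'$, and the random-permutation trick then yields the inequality. Note, however, that the paper does not supply its own proof of this theorem; it is quoted as a result of Bollob\'{a}s~\cite{bollobasgeneralized} and used as a black box, so there is no in-paper argument to compare against. What you have written is the standard proof of the Bollob\'{a}s set-pair inequality specialized to this setting, and it goes through without issue (the only mild edge cases, $G(H)=\emptyset$ or $H=[n]$, force $|\mathcal{H}|=1$ by the uniqueness assumption and are harmless).
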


Frankl \cite{franklstructural} proved the following structural result for  $(3,6)$-conditionally intersecting families.

\begin{theorem}[Frankl, \cite{franklstructural}]\label{thm:Frankl-structure-3-cluster}
Suppose that $\mathcal{F}\subset\binom{[n]}{3}$ is a $(3,6)$-conditionally intersecting family.
Then $\mathcal{F}$ can be partitioned into two families $\mathcal{H}$ and $\mathcal{B}$,
and the ground set $[n]$ can be partitioned into two disjoint subsets $Y$ and $Z$ such that the following statements hold.
\begin{enumerate}[label=(\alph*)]
\item $\mathcal{H}\subset\binom{Y}{3}$ and every set $H\in \mathcal{H}$ contains a unique $2$-subset.
\item $\mathcal{B}\subset\binom{Z}{3}$ and $\mathcal{B}$ is the vertex disjoint union of $|Z|/4$ copies of
complete $3$-graphs on $4$ vertices.
\end{enumerate}
\end{theorem}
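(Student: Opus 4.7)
My plan is to define the partition via the ``unique 2-subset'' property: call $F \in \mathcal{F}$ \emph{lonely} if some 2-subset $G \subset F$ is contained in no other member of $\mathcal{F}$, and let $\mathcal{H}$ be the lonely sets and $\mathcal{B} := \mathcal{F}\setminus\mathcal{H}$. Property (a) then holds immediately for $\mathcal{H}$ by choosing $G(H)$ to be any unique 2-subset of $H$. The work reduces to two claims: (i) every $F \in \mathcal{B}$ sits inside a $K_4^{(3)}$ contained in $\mathcal{F}$; and (ii) each such $K_4^{(3)}$ is \emph{isolated} in $\mathcal{F}$, in the sense that no other set of $\mathcal{F}$ meets its four vertices. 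Setting $Z$ to be the union of the vertex sets of the $K_4^{(3)}$'s inside $\mathcal{F}$ and $Y := [n] \setminus Z$, claim (ii) forces the $K_4^{(3)}$'s to be vertex-disjoint and forces $\mathcal{H} \subset \binom{Y}{3}$, yielding (b).

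For (i), take $F = \{x,y,z\} \in \mathcal{B}$. Since no 2-subset of $F$ is unique, I pick sets $F_1 = \{x,y,a\}, F_2 = \{x,z,b\}, F_3 = \{y,z,c\} \in \mathcal{F}$ each distinct from $F$, with $a,b,c \notin \{x,y,z\}$. Their union $\{x,y,z,a,b,c\}$ has size at most $6$, so the $(3,6)$-condition gives $F_1 \cap F_2 \cap F_3 \neq \emptyset$. Since $x \notin F_3$, $y \notin F_2$, $z \notin F_1$, any common element must equal $a$, $b$, and $c$ simultaneously, forcing $a = b = c =: w$. Hence $\{x,y,z,w\}$ spans a $K_4^{(3)}$ in $\mathcal{F}$.

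For (ii), suppose $\{x,y,z,w\}$ spans a $K_4^{(3)}$ in $\mathcal{F}$ and $F_0 \in \mathcal{F}$ meets $\{x,y,z,w\}$ but is not one of the four sets of that $K_4^{(3)}$. If $|F_0 \cap \{x,y,z,w\}| \ge 2$, say $\{x,y\} \subset F_0 = \{x,y,u\}$ with $u \notin \{z,w\}$, then the triple $F_0, \{x,z,w\}, \{y,z,w\}$ has union of size at most $5$ and empty intersection, contradicting the $(3,6)$-condition. If $|F_0 \cap \{x,y,z,w\}| = 1$, say $F_0 = \{x,u,v\}$ with $u,v \notin \{x,y,z,w\}$, then the triple $F_0, \{y,z,w\}, \{x,z,w\}$ has union of size exactly $6$ and empty intersection, again contradicting $(3,6)$. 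By the symmetry of $K_4^{(3)}$ no such $F_0$ exists, so isolation holds.

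The main obstacle is step (ii), since this is where the threshold $s = 6$ is used tightly: the union in the one-vertex-overlap case has size exactly $6$, so any slack in the hypothesis would destroy the contradiction. Once (i) and (ii) are in hand, the remaining bookkeeping --- vertex-disjointness of distinct $K_4^{(3)}$'s (any shared vertex would violate isolation of either), the inclusion $\mathcal{H} \subset \binom{Y}{3}$, and the count $|Z|/4$ of components --- is immediate.
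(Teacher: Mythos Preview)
Your proof is correct and follows essentially the same approach as the paper's proof of Theorem~\ref{thm:k-2k-structure} (the $(k,2k)$ generalization, of which this statement is the case $k=3$): define $\mathcal{B}$ as the ``bad'' (non-lonely) sets, show each bad set forces a $K_{4}^{(3)}$ via $C_1\cap C_2\cap C_3\neq\emptyset$, and then prove isolation by exhibiting a forbidden triple whenever some other $F$ meets $V_B$. Your case split on $|F_0\cap V_B|\in\{1,2\}$ with the specific triples $\{x,z,w\},\{y,z,w\}$ matches, up to relabeling, the paper's uniform choice of $F,B_1,B_2$ with $B_i=V_B\setminus\{b_i\}$ after arranging $F\cap V_B=\{b_1,\ldots,b_\ell\}$.
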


First, let us show how to use Theorem \ref{thm:Frankl-structure-3-cluster} to get an upper bound for $|\mathcal{F}|$.
Let $\mathcal{F}\subset\binom{[n]}{3}$ be a $(3,6)$-conditionally intersecting family,
and let $Y,Z, \mathcal{B} \text{ and } \mathcal{H}$ be given by Theorem \ref{thm:Frankl-structure-3-cluster}.
Since every set in $\mathcal{H}$ contains a unique $2$-subset, it follows from Theorem \ref{thm:Bollobas-unique-subset}
that $|\mathcal{H}|\le \binom{|Y|-1}{2}$.
On the other hand, it is easy to see that $|\mathcal{B}|=|Z|$.
Therefore,
\begin{align}
|\mathcal{F}|=|\mathcal{H}|+|\mathcal{F}|\le \binom{|Y|-1}{2}+|Z|\le \binom{n-1}{2}, \notag
\end{align}
and equality holds only if $Z = \emptyset$.

In \cite{franklstructural}, Frankl also asked for a structural result for a $(3,2k)$-conditionally intersecting family
$\mathcal{F}\subset \binom{[n]}{k}$ which can imply the $\binom{n-1}{k-1}$ bound for $|\mathcal{F}|$.
Here we consider a more general question, namely the structures of $(d,2k+d-3)$-conditionally intersecting families for all $k \ge d\ge 3$,
and we obtain the following result.

Let $\mathcal{L}_{k}$ denote the collection of all $k$-graphs on at most $2k$ vertices.

\begin{theorem}\label{thm:d-2k+d-3-structure}
Let $k\ge d\ge 3$ be fixed.
Suppose that $\mathcal{F}\subset \binom{[n]}{k}$ is a $(d,2k+d-3)$-conditionally intersecting family.
Then $\mathcal{F}$ can be partitioned into three families $\mathcal{H}$, $\mathcal{B}$ and $\mathcal{S}$,
and the ground set $[n]$ can be partitioned into two subsets $Y$ and $Z$ such that  the following statements hold.
\begin{enumerate}[label=(\alph*)]
\item $\mathcal{H}\subset \binom{Y}{k}$ and every set $H\in \mathcal{H}$ contains a unique $(k-1)$-subset.
\item $Z$ has a partition $V_1\cup \cdots\cup V_t$ with each $V_i$ of size at most $2k$ such that $\mathcal{B}\subset \bigcup_{i=1}^{t}\binom{V_i}{k}$,
i.e., the family $\mathcal{B}$ is the vertex disjoint union of copies of $k$-graphs in $\mathcal{L}_{k}$
\item $\mathcal{S}\subset \binom{[n]}{k}-\binom{Y}{k}$, and for every set $S\in \mathcal{S}$ and every $V_i\subset Z$
the size of $S\cap V_i$ is either $0$ or at least $d$.
\end{enumerate}
\end{theorem}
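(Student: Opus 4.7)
The plan is to build the partition by a greedy cluster-peeling procedure. Initialize $\mathcal{H} := \mathcal{F}$, $Y := [n]$, and $Z := \mathcal{B} := \mathcal{S} := \emptyset$. While some $A \in \mathcal{H}$ fails to contain a unique $(k-1)$-subset with respect to $\mathcal{H}$, pick for each of the $k$ subsets $G \in \tbinom{A}{k-1}$ a companion $A_G = G \cup \{x_G\} \in \mathcal{H}\setminus\{A\}$ (necessarily $x_G \notin A$), form the cluster $V := A \cup \{x_G : G \in \tbinom{A}{k-1}\}$, which has size at most $2k$, and append $V$ to the growing list of $V_i$'s. Then move every current $H \in \mathcal{H}$ with $H \subseteq V$ into $\mathcal{B}$ and every current $H \in \mathcal{H}$ that properly crosses $V$ into $\mathcal{S}$, and replace $Y$ by $Y\setminus V$, $Z$ by $Z \cup V$. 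Since $|Y|$ strictly decreases the process terminates. Property (a) is then automatic from the exit condition, and property (b) holds because the $V_i$'s are pairwise disjoint with $|V_i|\le 2k$ by construction.

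The heart of the proof is the following key claim: for every cluster $V$ constructed above and every $S \in \mathcal{F}$, we have $|S \cap V| \in \{0\} \cup \{d, d+1, \ldots, k\}$. Applied to each $V_i$ this is exactly (c); note that because the claim is stated for the entire $\mathcal{F}$ rather than the shrinking $\mathcal{H}$, sets already placed into $\mathcal{S}$ remain compatible with all later clusters as well. Suppose for contradiction that $1 \le |S \cap V| \le d-1$, and set $j_A := |S \cap A|$, $j_X := |S \cap (V\setminus A)|$. We build $d$ distinct members of $\mathcal{F}$ with union of size at most $2k+d-3$ and empty intersection, contradicting the $(d, 2k+d-3)$-conditionally intersecting hypothesis.

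If $j_A \le d-2$, the chosen $d$-collection is $\{A, A_{G_1}, \ldots, A_{G_{d-2}}, S\}$, where the missing vertices $a_\ell := A\setminus G_\ell$ are picked so that $\{a_1,\ldots,a_{d-2}\} \supseteq A\cap S$ and, in the subcase $j_A = 0$, so that at least one $x_{G_\ell}$ lies in $S$ (available since $j_X \ge 1$). Then $A \cap A_{G_1} \cap \cdots \cap A_{G_{d-2}} \cap S = (A\setminus\{a_1,\ldots,a_{d-2}\})\cap S = \emptyset$, and the union size is bounded by $(2k - j_A) + (d-2) - |\{x_{G_1},\ldots,x_{G_{d-2}}\} \cap S| \le 2k+d-3$ in every subcase. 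If instead $j_A = d-1$ (which forces $j_X = 0$), we use $\{A_{G_1}, \ldots, A_{G_{d-1}}, S\}$ with $\{a_1, \ldots, a_{d-1}\} = A\cap S$: the union lies in $A\cup S \cup \{x_{G_\ell}\}_\ell$, of size at most $2k$, and a direct computation gives the intersection as $((A\setminus\{a_\ell\})\cap S) \cup (\{v : x_{G_\ell} = v \text{ for all } \ell\} \cap S)$. The first piece is empty because $A\cap S = \{a_\ell\}$, and the second is empty either because the $x_{G_\ell}$'s are non-constant or, in the collision case, because the common value $v^* \in V\setminus A$ cannot lie in $S$ when $j_X = 0$.

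The principal obstacle is the subcase $j_A = d-1$: including $A$ itself in the offending $d$-collection leaves a surviving vertex in the joint intersection, so one must use all $d-1$ chosen neighbours of $A$ together with $S$, and the ``common extra vertex'' issue is ruled out precisely by the constraint $j_X = 0$. A smaller technical point is verifying $|V| \le 2k$, which holds simply because $V$ adds at most $k$ vertices (one $x_G$ per $G$) to $A$, and ensuring distinctness of the $d$ members of each offending collection, which follows from $|S\cap V|<k$ together with $A_{G_\ell} \subseteq V$.
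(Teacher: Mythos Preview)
Your proof is correct and follows essentially the same approach as the paper. Your ``key claim'' is exactly the paper's Lemma~\ref{lemma:d-s-size-intersection}, and your greedy cluster-peeling is the paper's greedy selection of bad sets $B_1,\ldots,B_t$ with pairwise disjoint $V_{B_i}$; your case analysis for the claim (splitting on $j_A\le d-2$ versus $j_A=d-1$, and in the former subcase forcing one $x_{G_\ell}\in S$ when $j_A=0$) is a mild streamlining of the paper's separate treatment of $x=d-1$, $x=d-2$, $1\le x\le d-3$, and $x=0$.
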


Note that the constraint on $|S\cap V_i|$ in $(c)$ for $S\in \mathcal{S}$
and $V_i\subset Z$ implies that the family $\mathcal{S}$ is actually very sparse.
Therefore, the term $|\mathcal{S}|$ contributes very little to $|\mathcal{F}|$.

Our next result gives a structure for $(k,2k)$-intersecting families for all $k\ge 3$.

\begin{theorem}\label{thm:k-2k-structure}
Let $k\ge 3$ be fixed.
Suppose that $\mathcal{F}\subset \binom{[n]}{k}$ is a $(k,2k)$-conditionally intersecting family.
Then $\mathcal{F}$ can be partitioned into two families $\mathcal{H}$ and $\mathcal{B}$,
and the ground set $[n]$ can be partitioned into two subsets $Y$ and $Z$ such that the following statements hold.
\begin{enumerate}[label=(\alph*)]
\item $\mathcal{H}\subset \binom{Y}{k}$ and every set $H\in \mathcal{H}$ contains a unique $(k-1)$-subset.
\item $\mathcal{B}\subset \binom{Z}{k}$ and $\mathcal{B}$ is the vertex disjoint union of $\frac{|Z|}{k+1}$ copies of
complete $k$-graphs on $(k+1)$ vertices.
\end{enumerate}
\end{theorem}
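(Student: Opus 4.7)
The plan is to follow the template used in Frankl's Theorem~\ref{thm:Frankl-structure-3-cluster} and our Theorem~\ref{thm:d-2k+d-3-structure}: take $\mathcal{H}$ to be the collection of those $F\in\mathcal{F}$ that contain a unique $(k-1)$-subset and set $\mathcal{B}=\mathcal{F}\setminus\mathcal{H}$. Condition~(a) is then automatic once a compatible partition $[n]=Y\cup Z$ is chosen, so the entire task reduces to showing that $\mathcal{B}$ has the disjoint clique structure on a suitable $Z$ and that no $H\in\mathcal{H}$ uses a vertex of $Z$.

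To locate the cliques, fix $B=\{b_1,\dots,b_k\}\in\mathcal{B}$. For each $i\in[k]$, the non-uniqueness of $B\setminus\{b_i\}$ supplies a vertex $c_i\notin B$ with $B^{(i)}:=(B\setminus\{b_i\})\cup\{c_i\}\in\mathcal{F}$. A direct inspection shows that $|\bigcup_{i=1}^{k} B^{(i)}|\le k+|\{c_1,\dots,c_k\}|\le 2k$ and that $\bigcap_{i=1}^{k} B^{(i)}$ is empty \emph{unless} $c_1=\dots=c_k$. Since $\mathcal{F}$ contains no $k$-cluster, the $c_i$'s are forced to a single value, producing a unique vertex $v_B\notin B$ with $\binom{W_B}{k}\subseteq\mathcal{F}$ for $W_B:=B\cup\{v_B\}$. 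A short verification then shows that each $B'\in\binom{W_B}{k}$ itself lies in $\mathcal{B}$ and satisfies $W_{B'}=W_B$, so these $(k+1)$-sets are the candidate components of the structure in~(b).

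I expect the main obstacle to be showing that distinct $W_B$'s are vertex-disjoint. Given $W_1\ne W_2$ with $t:=|W_1\cap W_2|\ge 1$ and $S:=W_1\cap W_2$, the plan is to extract a $k$-cluster from $\binom{W_1}{k}\cup\binom{W_2}{k}\subseteq\mathcal{F}$, contradicting the hypothesis. For $2\le t\le k-1$, take the $t$ edges $\{W_1\setminus\{s\}:s\in S\}$ together with $k-t$ edges $W_2\setminus\{w'\}$ for distinct $w'\in W_2\setminus S$; every candidate element of the intersection must lie in $S$ yet is omitted by some $W_1$-edge, and the union is contained in $W_1\cup W_2$ of size $2k+2-t\le 2k$. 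The boundary cases need tailored configurations: for $t=1$ take $k-1$ edges of $W_1$ (one of which omits the shared vertex) together with one edge $W_2\setminus\{w'_1\}$ for a vertex $w'_1\in W_2\setminus W_1$, giving a union of size exactly $2k$; for $t=k$, writing $W_1=S\cup\{x\}$ and $W_2=S\cup\{y\}$, take $k-1$ edges of $W_1$ omitting $k-1$ vertices of $S$ together with the edge of $W_2$ omitting the remaining vertex of $S$, so that the union reduces to $S\cup\{x,y\}$ of size $k+2\le 2k$. In each case the intersection is forced to be empty by the $W_1$-edges, yielding the desired $k$-cluster.

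Once disjointness is established, set $Z:=\bigsqcup_B W_B$ and $Y:=[n]\setminus Z$; the $\mathcal{B}$-side of the structure in~(b) then holds by construction. To finish, suppose some $H\in\mathcal{H}$ has $|H\cap W_B|=s\ge 1$ for some $B$. The case $s=k$ gives $H\in\binom{W_B}{k}\subseteq\mathcal{B}$, contradicting $H\in\mathcal{H}$. Otherwise $|W_B\setminus H|\ge 2$, so one can choose $\{u,v\}\subseteq W_B\setminus H$ and consider the $k$ sets $\{H\}\cup\{W_B\setminus\{w\}:w\in W_B\setminus\{u,v\}\}$, whose intersection equals $H\cap\{u,v\}=\emptyset$ and whose union $H\cup W_B$ has size $2k+1-s\le 2k$. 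This $k$-cluster contradicts the hypothesis, so $\mathcal{H}\subseteq\binom{Y}{k}$, completing the proof.
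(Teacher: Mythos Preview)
Your proof is correct and follows essentially the same approach as the paper: a bad set $B$ forces a complete $k$-graph on a $(k{+}1)$-set $W_B$, and every other edge of $\mathcal{F}$ must avoid $W_B$. The paper is more economical because it proves directly that every $F\in\mathcal{F}\setminus\binom{W_B}{k}$ satisfies $F\cap W_B=\emptyset$ (this is exactly your step~7 argument, applied to an arbitrary $F$ rather than only to $H\in\mathcal{H}$); that single claim already yields both $\mathcal{H}\subset\binom{Y}{k}$ and the pairwise disjointness of the $W_B$'s, so your separate case analysis on $t=|W_1\cap W_2|$ is not needed.
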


Applying the structural results above we are able to give some new proofs to the following theorems.

\begin{theorem}\label{thm:upper-bound-d-2k+d-3}
Let $k\ge d\ge 3$ be fixed and $n\ge 3k^5$.
Suppose that $\mathcal{F}\subset \binom{[n]}{k}$ is a $(d,2k+d-3)$-conditionally intersecting family.
Then $|\mathcal{F}|\le\binom{n-1}{k-1}$.
\end{theorem}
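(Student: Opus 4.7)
The plan is to decompose $\mathcal{F}$ via Theorem~\ref{thm:d-2k+d-3-structure} as $\mathcal{F}=\mathcal{H}\cup\mathcal{B}\cup\mathcal{S}$ with ground set partition $[n]=Y\cup Z$, $Z=V_1\cup\cdots\cup V_t$, $|V_i|\le 2k$, bound each piece separately, and finish with a convexity argument in $z=|Z|$. The family $\mathcal{H}$ is handled directly by Bollob\'{a}s: since each $H\in\mathcal{H}$ lies inside $Y$ and has a unique $(k-1)$-subset, Theorem~\ref{thm:Bollobas-unique-subset} applied with ground set $Y$ gives $|\mathcal{H}|\le\binom{|Y|-1}{k-1}$.

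For $\mathcal{B}\cup\mathcal{S}$, the key observation is that every $F\in\mathcal{B}\cup\mathcal{S}$ satisfies $|F\cap V_i|\ge d$ for some block $V_i$: for $F\in\mathcal{B}$ we have $F\subset V_i$ and $k\ge d$, while for $F\in\mathcal{S}$ this is precisely condition~(c). Double-counting pairs $(F,D)$ with $F\in\mathcal{B}\cup\mathcal{S}$, $D\in\binom{V_i}{d}$, and $D\subset F$ yields
\[
|\mathcal{B}|+|\mathcal{S}|\;\le\;\sum_{i=1}^{t}\binom{|V_i|}{d}\binom{n-d}{k-d}\;\le\; z\binom{2k}{d}\binom{n-d}{k-d},
\]
using $|V_i|\le 2k$ and $t\le z$. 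Writing $C=\binom{2k}{d}\binom{n-d}{k-d}$, we obtain $|\mathcal{F}|\le g(z)$ where $g(z)=\binom{n-z-1}{k-1}+Cz$.

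Pascal's identity gives the discrete second difference $g(z+2)-2g(z+1)+g(z)=\binom{n-z-3}{k-3}\ge 0$, so $g$ is convex on $\{0,1,\ldots,n\}$ and therefore $g(z)\le\max\{g(0),g(n)\}=\max\{\binom{n-1}{k-1},\,Cn\}$. It remains to verify $Cn\le\binom{n-1}{k-1}$ whenever $n\ge 3k^5$. Using
\[
\frac{\binom{n-d}{k-d}}{\binom{n-1}{k-1}}=\prod_{j=1}^{d-1}\frac{k-j}{n-j}\;\le\;\left(\frac{k}{n}\right)^{d-1}
\]
together with $\binom{2k}{d}\le (2k)^d/d!$, a routine calculation produces
\[
\frac{Cn}{\binom{n-1}{k-1}}\;\le\;\frac{2^d}{d!\,3^{d-2}}\,k^{9-3d},
\]
which equals $4/9$ at $d=3$ and is exponentially small for $d\ge 4$. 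Hence $|\mathcal{F}|\le\binom{n-1}{k-1}$. The main obstacle is this boundary estimate at $z=n$: it is essentially tight when $d=3$, and the hypothesis $n\ge 3k^5$ is chosen precisely to ensure the inequality $Cn\le\binom{n-1}{k-1}$ holds in this binding case; for larger $d$ the estimate has substantial slack.
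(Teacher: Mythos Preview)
Your proof is correct and follows essentially the same route as the paper: apply the structural decomposition, bound $\mathcal{H}$ via Bollob\'{a}s, bound the contribution of $\mathcal{B}\cup\mathcal{S}$ by a quantity linear in $z=|Z|$, and then optimize the resulting expression $\binom{n-z-1}{k-1}+Cz$. The only cosmetic difference is the finish: where the paper splits into the cases $n-y_0\le\delta n$ and $n-y_0>\delta n$ (with $\delta=(2\binom{2k}{3})^{-1}$), you observe that the bound is convex in $z$ and simply compare the endpoints $z=0$ and $z=n$, which is a slightly cleaner way to reach the same conclusion.
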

Note that Theorem \ref{thm:upper-bound-d-2k+d-3} is true for every $n\ge 3k/2$ according to the result in \cite{mubayi2006erdos},
but in our proof we need the assumption that $n\ge 3k^5$ to keep the calculations simple.

\begin{theorem}\label{thm:upper-bound-k-2k}
Let $k\ge 3$ be fixed and $n\ge k^2/(k-1)$.
Suppose that $\mathcal{F}\subset \binom{[n]}{k}$ is a $(k,2k)$-conditionally intersecting family.
Then $|\mathcal{F}|\le\binom{n-1}{k-1}$.
\end{theorem}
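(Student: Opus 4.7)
The plan is to apply Theorem~\ref{thm:k-2k-structure} to decompose $\mathcal{F}$ and then bound each piece separately. Let $\mathcal{H}$, $\mathcal{B}$, $Y$, $Z$ be the partition it provides, and set $y := |Y|$ and $z := |Z| = n - y$. By part~(b), $\mathcal{B}$ is a vertex-disjoint union of $z/(k+1)$ copies of the complete $k$-graph on $k+1$ vertices, each contributing exactly $k+1$ sets, so $|\mathcal{B}| = z$. For $\mathcal{H}$, part~(a) supplies a unique $(k-1)$-subset $G(H) \subset H$ for every $H \in \mathcal{H}$. Viewing $\mathcal{H}$ as a family on the smaller ground set $Y$ (which is legal since $\mathcal{H} \subset \binom{Y}{k}$ and the uniqueness property is intrinsic to $\mathcal{H}$) and applying Theorem~\ref{thm:Bollobas-unique-subset} with $|G(H)| = k-1$ and $|H - G(H)| = 1$, I obtain $|\mathcal{H}| \le \binom{y-1}{k-1}$.

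It therefore suffices to verify the purely numerical inequality
\begin{equation*}
\binom{y-1}{k-1} + (n - y) \;\le\; \binom{n-1}{k-1} \qquad \text{for every } y \in \{0, 1, \ldots, n\}.
\end{equation*}
Rewriting this as $\binom{n-1}{k-1} - \binom{y-1}{k-1} \ge n - y$ and using the telescoping identity $\binom{n-1}{k-1} - \binom{y-1}{k-1} = \sum_{j=y-1}^{n-2}\binom{j}{k-2}$, the task reduces to checking $\sum_{j=y-1}^{n-2}\binom{j}{k-2} \ge n - y$. I split into two regimes. If $y \ge k-1$, every term $\binom{j}{k-2}$ in the sum is at least $1$ and there are exactly $n - y$ terms, so the bound is immediate. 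If $y \le k-2$, then $\binom{y-1}{k-1}$ vanishes (forcing $\mathcal{H} = \emptyset$) and the claim reduces to the single inequality $n \le \binom{n-1}{k-1}$; this is where the hypothesis $n \ge k^2/(k-1)$ enters, via the elementary observation that $k^2/(k-1) > k+1$ forces $n \ge k+2$, a value already large enough to make $\binom{n-1}{k-1} \ge n$ hold.

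The only nontrivial point is this small-$y$ regime, since there the structural decomposition does essentially no work and the entire estimate rests on $n \le \binom{n-1}{k-1}$. It is precisely here, and only here, that the hypothesis $n \ge k^2/(k-1)$ is used. In the complementary range $y \ge k-1$ the inequality is comfortably slack --- the summands $\binom{j}{k-2}$ grow polynomially in $j$ --- so I do not expect any genuine technical difficulty beyond verifying the boundary computation $n \le \binom{n-1}{k-1}$ for $n \ge \lceil k^2/(k-1) \rceil$.
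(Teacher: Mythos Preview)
Your proof is correct and follows the same route as the paper: apply Theorem~\ref{thm:k-2k-structure}, bound $|\mathcal{H}|$ via Bollob\'as on the ground set $Y$, compute $|\mathcal{B}|=|Z|$ directly, and add. The paper simply asserts the final inequality $\binom{|Y|-1}{k-1}+|Z|\le\binom{n-1}{k-1}$ without further comment, whereas you verify it carefully and in doing so pinpoint exactly where the hypothesis $n\ge k^2/(k-1)$ (equivalently $n\ge k+2$) is needed.
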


\begin{theorem}\label{thm:upper-bound-nonintersecting-3-2k}
Let $k\ge 3$ be fixed and $n\ge 3k\binom{2k}{k}$.
Let $\mathcal{F}\subset \binom{[n]}{k}$ be a family that is $(3,2k)$-conditionally intersecting but not intersecting.
Then $|\mathcal{F}|\le\binom{n-k-1}{k-1}+1$.
\end{theorem}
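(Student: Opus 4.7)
The plan is to invoke Theorem~\ref{thm:d-2k+d-3-structure} with $d=3$ (so $s=2k$) to decompose $\mathcal{F}=\mathcal{H}\sqcup\mathcal{B}\sqcup\mathcal{S}$ and $[n]=Y\sqcup Z$ with blocks $V_1,\dots,V_t$ partitioning $Z$, each $|V_i|\le 2k$. Since $\mathcal{F}$ is not intersecting, fix a disjoint pair $A,B\in\mathcal{F}$. The key preliminary observation is that $\mathcal{F}\cap\binom{A\cup B}{k}=\{A,B\}$: any additional $C\subseteq A\cup B$ would form a $3$-cluster with $A,B$ since $|A\cup B\cup C|=|A\cup B|=2k$ while $A\cap B\cap C\subseteq A\cap B=\emptyset$. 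This matches the extremal configuration $\{A\}\cup\{F:v\in F,\ F\cap A=\emptyset\}$ for some $v\notin A$, and suggests that the partition should place $A$ in its own block.

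Each piece would then be bounded separately. Theorem~\ref{thm:Bollobas-unique-subset} applied with the unique $(k-1)$-subsets gives $|\mathcal{H}|\le\binom{|Y|-1}{k-1}$. On each block $V_i$, the restriction $\mathcal{B}\cap\binom{V_i}{k}$ is $3$-wise intersecting (any three of its sets have union $\subseteq V_i$ of size at most $2k$), so $|\mathcal{B}\cap\binom{V_i}{k}|\le\binom{|V_i|}{k}\le\binom{2k}{k}$, yielding $|\mathcal{B}|\le\binom{2k}{k}|Z|/k$. Property~(c) forces each $S\in\mathcal{S}$ to contain a $3$-subset of some block, hence $|\mathcal{S}|\le\binom{n-3}{k-3}\sum_i\binom{|V_i|}{3}=O\bigl(|Z|k^2\binom{n-3}{k-3}\bigr)$.

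The argument is completed by a case split on $|Z|$. When $|Z|\ge k+1$, we have $|Y|\le n-k-1$, and the slack $\binom{n-k-1}{k-1}-\binom{|Y|-1}{k-1}\ge\binom{n-k-2}{k-2}$, combined with $n\ge 3k\binom{2k}{k}$, comfortably absorbs $|\mathcal{B}|+|\mathcal{S}|$. The tight boundary case $|Z|=k$ forces $V_1$ of size $k$ and $|\mathcal{B}|\le 1$ (producing the $+1$); I would rule out $\mathcal{S}$ by a balancing argument. When $|\mathcal{H}|$ is at the Bollob\'as bound it must be a star through some $v\in Y$, and then any $S\in\mathcal{S}$ with $|S\cap V_1|\ge 3$ is eliminated: any $H\in\mathcal{H}$ containing $\{v\}\cup(S\setminus V_1)$ satisfies $S\subseteq V_1\cup H$, so $\{V_1,S,H\}$ has union exactly $2k$ and empty triple intersection, a $3$-cluster, contradiction. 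When $|\mathcal{H}|$ is strictly below the Bollob\'as bound, the deficit of at least $\binom{n-2k+2}{|S\cap V_1|-1}\ge\binom{n-2k+2}{2}$ per excluded $H$ dominates any contribution of $|\mathcal{S}|$ under $n\ge 3k\binom{2k}{k}$.

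The main obstacle is the degenerate case $|Z|<k$, where no block can contain a $k$-set, so $\mathcal{B}=\emptyset$ and $\mathcal{F}\subseteq\mathcal{H}$; Bollob\'as alone then gives only $\binom{|Y|-1}{k-1}\le\binom{n-1}{k-1}$, too weak for our target. I would handle this by exploiting the non-uniqueness of the partition in Theorem~\ref{thm:d-2k+d-3-structure}: the constraint $\mathcal{F}\cap\binom{A\cup B}{k}=\{A,B\}$ lets one re-partition by placing $A$ into its own block $V_1$, thereby ensuring $|Z|\ge k$ and reducing to the previous cases. Verifying that this re-partition maintains the sparseness condition of property~(c) uniformly is the technical crux of the proof.
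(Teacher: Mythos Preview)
Your outline handles the ``bad set'' case essentially as the paper does: when some $B\in\mathcal{F}$ lacks a unique $(k-1)$-subset, one removes the block $V_B$ (of size between $k+1$ and $2k$), applies Mubayi's bound on the complement, and absorbs the boundary terms using $n\ge 3k\binom{2k}{k}$. That part is fine, and in fact it is the easy half.

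The genuine gap is in the complementary case. First, your trichotomy on $|Z|$ is miscalibrated: each block $V_i$ has size at least $k+1$ (since $V_{B_i}=B_i\cup C_1\cup\cdots\cup C_k$ contains at least one vertex outside $B_i$), so the only possibilities are $|Z|=0$ or $|Z|\ge k+1$. Your ``tight boundary case $|Z|=k$'' never occurs, and the case $|Z|=0$ is precisely the one realised by the extremal family $\{A\}\cup\{F:v\in F,\ F\cap A=\emptyset\}$. In that family every edge has a unique $(k-1)$-subset, so Theorem~\ref{thm:d-2k+d-3-structure} returns $Z=\emptyset$, $\mathcal{F}=\mathcal{H}$, and Bollob\'as alone gives only $\binom{n-1}{k-1}$.

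Your proposed fix, declaring $V_1:=A$ by fiat, does not give back property~(c). Property~(c) is not a formal bookkeeping device; it encodes Lemma~\ref{lemma:d-s-size-intersection}, which relies on $B$ being bad so that the witnesses $C_1,\dots,C_k$ exist and can be combined with $F$ into a forbidden configuration. For an arbitrary set $A$ there is no obstruction to $|F\cap A|\in\{1,2\}$: already $\mathcal{F}=\{A,B,F\}$ with $A,B$ disjoint and $|F\cap A|=1$, $F\cap B=\emptyset$ is $(3,2k)$-conditionally intersecting. So the ``technical crux'' you flag is not merely technical; the statement you would need is false.

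The paper's proof accepts that in the $Z=\emptyset$ case the structural theorem gives nothing, and instead builds a separate argument around the disjoint pair $A,B$: writing $I=A\cup B$, stratifying $\mathcal{F}$ by $|F\cap I|$, and proving the layered inequality $\sum_{i\le\ell}|\mathcal{F}_i|\le\sum_{i=1}^{\ell}\binom{n-2k}{k-i}\binom{k-1}{i-1}$ via a notion of ``perfect'' subsets of $U=[n]\setminus I$ (those $C$ for which $\mathcal{F}(C)$ is a full star on $A$ or $B$) and a propagation lemma showing perfection descends from $C$ to its $(|C|-1)$-subsets. This machinery is what replaces the missing sparseness when no bad set exists, and it is the substantive part of the theorem; your plan does not supply a substitute for it.
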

Theorem \ref{thm:upper-bound-nonintersecting-3-2k} shows that Mammoliti and Britz's conjecture
(Conjecture 4.1 in $\cite{mammoliti2017}$) is true for the case $d=3$.
Note that in \cite{XL19} the author considered  Mammoliti and Britz's conjecture for all $d\ge 3$,
and showed that their conjecture is true for $d=3$, but false for all $d\ge 4$.
However, the method we used here is completely different from the method used in \cite{XL19}.

The remaining part of this paper is organized as follows.
In Section 2, we prove Theorems \ref{thm:d-2k+d-3-structure} and \ref{thm:k-2k-structure}.
In Section 3, we prove Theorems \ref{thm:upper-bound-d-2k+d-3},
\ref{thm:upper-bound-k-2k}, and \ref{thm:upper-bound-nonintersecting-3-2k}.

\section{Structural Results}
Let $\mathcal{F}$ be a $k$-uniform family on $[n]$ and $B\in \mathcal{F}$.
We say $B$ is \textit{bad} if it does not contain any unique $(k-1)$-subset.
Suppose that $B=\{b_1,\ldots, b_k\}$ is a bad set in $\mathcal{F}$, then there exist $k$ distinct sets
$C_1,\ldots, C_k$ in $\mathcal{F}$ such that $B\cap C_i=B-\{b_i\}$ for all $i\in [k]$.
Let $V_B = B\cup C_1\cdots \cup C_k$ and $H_B = \{B,C_1,\ldots, C_k\}$.
First let us prove Theorem \ref{thm:k-2k-structure}.

\begin{proof}[Proof of Theorem \ref{thm:k-2k-structure}]
Suppose that $\mathcal{F}$ is a $(k,2k)$-conditionally intersecting family,
and suppose that $B=\{b_1,\ldots,b_k\}$ is a bad set in $\mathcal{F}$.
Let $C_1,\cdots,C_k,V_{B}, H_{B}$ be defined as above.
Since $|V_{B}| \le  2k$, by assumption we have $C_1\cap \cdots \cap C_k\neq \emptyset$.
It follows that $|V_B|=k+1$ and, hence, the family $H_B$ is a complete $k$-graph on $V_B$.
Let $b_{k+1}$ denote the vertex in $V_B-B$, and let $F\in \mathcal{F}-H_B$. Then we claim that $F\cap V_B=\emptyset$.
Indeed, suppose that $F\cap V_B\neq\emptyset$.
We may assume that $F\cap V_B=\{b_1,\ldots,b_{\ell}\}$ for some $\ell\in [k-1]$.
Now, rename the edges in $H_B$ as $B_i=V_B-b_i$ for all $i\in[k+1]$.
Since $|F\cup B_1\cup \cdots \cup B_{k-1}| \le 2k$ and $F\cap B_1\cap \cdots \cap B_{k-1}=\emptyset$,
the $k$ sets $F,B_1,\ldots,B_{k-1}$ form a $k$-cluster in $\mathcal{F}$, a contradiction.
Therefore, $F\cap V_B=\emptyset$.
To finish the proof we just let $\mathcal{B}$ be the collection of all bad sets in $\mathcal{F}$,
and let $\mathcal{H}=\mathcal{F}-\mathcal{B}$.
\end{proof}

Before proving Theorem \ref{thm:d-2k+d-3-structure} let us present a useful lemma.
Let $s = 2k +d -3$.

\begin{lemma}\label{lemma:d-s-size-intersection}
Suppose that $\mathcal{F}$ is a $(d,s)$-conditionally intersecting family and $B$ is a bad set in $\mathcal{F}$.
Then for every $F\in \mathcal{F}$ either $|F\cap V_B|=0$ or $|F\cap V_B|\ge d$.
\end{lemma}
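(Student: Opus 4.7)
The plan is to argue by contradiction: suppose some $F \in \mathcal{F}$ has $1 \le |F \cap V_B| \le d-1$, and construct from $F$ and $d-1$ carefully chosen members of $H_B = \{B, C_1, \ldots, C_k\}$ a $d$-cluster of union size at most $s = 2k+d-3$ with empty intersection, contradicting the $(d,s)$-conditionally intersecting hypothesis. Writing $C_i = (B \setminus \{b_i\}) \cup \{c_i\}$ with $c_i \notin B$, I set $a = |F \cap B|$ and $b = |F \cap (V_B \setminus B)|$, so the hypothesis becomes $1 \le a+b \le d-1$, and I will split on the value of $a$.

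In the main case $1 \le a \le d-2$, the candidate cluster will be $F, B, C_{i_1}, \ldots, C_{i_{d-2}}$, with indices $i_1, \ldots, i_{d-2}$ chosen so that $\{b_{i_1}, \ldots, b_{i_{d-2}}\} \supseteq F \cap B$. The intersection collapses to $(F \cap B) \setminus \{b_{i_1}, \ldots, b_{i_{d-2}}\} = \emptyset$, and because each $C_{i_j}$ contributes at most one vertex outside $B$ while $F$ already shares at least $a \ge 1$ vertex with $B$, a routine count will give union size at most $2k + d - 2 - a \le s$. The degenerate subcase $a = 0$ uses the same cluster template, but the slack lost from $F \cap B = \emptyset$ must be reclaimed by insisting that some chosen index $i_j$ satisfies $c_{i_j} \in F$, an index that exists because $b \ge 1$.

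The remaining case $a = d-1$ (which forces $b = 0$) is where I expect the main obstacle to lie, because including $B$ no longer empties the intersection. I will instead drop $B$ and take $F$ together with $d-1$ of the $C_i$'s, choosing indices $i_1, \ldots, i_{d-1}$ so that $\{b_{i_1}, \ldots, b_{i_{d-1}}\} = F \cap B$. Then $\bigcap_j C_{i_j}$ sits inside $(B \setminus F) \cup \{c\}$ for at most one shared extra vertex $c \notin B$, and this $c$ avoids $F$ because $b = 0$; using $d - 1 \ge 2$ to see that $\bigcup_j C_{i_j} \supseteq B$, the total union size will turn out to be at most $2k$, which squeezes into $s = 2k+d-3$ precisely because $d \ge 3$. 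This case is both where the bound $s$ is saturated and where the standing assumption $d \ge 3$ gets used non-redundantly, so it is the step that will require the most care in the write-up.
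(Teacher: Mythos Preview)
Your proposal is correct and follows essentially the same approach as the paper's own proof: assume $1\le |F\cap V_B|\le d-1$ and build a forbidden $d$-tuple from $F$ together with $d-1$ sets drawn from $\{B,C_1,\ldots,C_k\}$, splitting cases on $a=|F\cap B|$. Your case organization is in fact slightly tidier---you handle $1\le a\le d-2$ in a single stroke where the paper treats $a=d-2$ and $a\le d-3$ separately, and in the $a=0$ case you need only one $C_i$ with $c_i\in F$ rather than all $y$ of them---but the constructions and the arithmetic are the same.
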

\begin{proof}
Let $B$ is a bad set in $\mathcal{F}$ and let $V_B$ be the set as we defined before.
Suppose that $F\in \mathcal{F}$ has nonempty intersection with $V_B$.
It suffices to show that $|F\cap V_B|\ge d$.
For contradiction, suppose that $|F\cap B| = x$, $|F\cap (V_{B}-B)| = y$ and $x+y\le d-1$.
Suppose that $F\cap B=\{b_{m_1},\ldots,b_{m_x}\}$ and $F\cap (V_{B}-B) = \{c_{n_1},\ldots,c_{n_y}\}$.

If $x=d-1$, then $y = 0$ and, hence,  the $d$ sets $F,C_{m_1},\ldots,C_{m_{d-1}}$ satisfy
 $|F\cup C_{m_1}\cup \cdots \cup C_{m_{d-1}}|\le 2k$
and $F\cap C_{m_1}\cap \cdots \cap C_{m_{d-1}}=\emptyset$, a contradiction.
If $x=d-2$, then the $d$ sets $F,B,C_{m_1},\ldots,C_{m_{d-2}}$ satisfy
$|F\cup B\cup C_{m_1}\cup \cdots \cup C_{m_{d-2}}|\le 2k$
and $F\cap B\cap C_{m_1}\cap \cdots \cap C_{m_{d-2}}=\emptyset$, a contradiction.
Therefore, we may assume that $x\le d-3$.
Let $p=d-(x+2)$.
Choose $p$ sets $C_{q_1},\ldots,C_{q_p}$ from $\{C_1,\ldots,C_k\}-\{C_{m_1},\ldots,C_{m_{x}}\}$.
Then the $d$ sets $F,B,C_{m_1},\ldots,C_{m_x},C_{q_1},\ldots,C_{q_p}$ satisfy
$|F\cup B\cup C_{m_1}\cup \cdots \cup C_{m_x}\cup C_{q_1}\cup \cdots \cup C_{q_p}|\le 2k+p$
and $F\cap B\cap C_{m_1}\cap \cdots \cap C_{m_x}\cap C_{q_1}\cap \cdots \cap C_{q_p}=\emptyset$.
By assumption we have $2k+p\ge s$ and, hence, $x=0$ and $y\ge 1$.

Let $p'=d-(y+2)$, and choose $p'$ sets $C_{q_1},\ldots,C_{q_{p'}}$ from $\{C_1,\ldots,C_k\}-\{C_{n_1},\ldots,C_{n_{y}}\}$.
Then the $d$ sets $F,B,C_{n_1},\ldots,C_{n_y},C_{q_1},\ldots,C_{q_{p'}}$ satisfy
$|F\cup B\cup C_{n_1}\cup \cdots \cup C_{n_y}\cup C_{q_1}\cup \cdots \cup C_{q_{p'}}|\le 2k+p'\le s$
and $F\cap B\cap C_{n_1}\cap \cdots \cap C_{n_y}\cap C_{q_1}\cap \cdots \cap C_{q_{p'}}=\emptyset$, a contradiction.
Therefore, we have $|F\cap V_b|\ge d$.
\end{proof}

Now we are ready to prove Theorem \ref{thm:d-2k+d-3-structure}.

\begin{proof}[Proof of Theorem \ref{thm:d-2k+d-3-structure}]
Let $\mathcal{F}$ be a $(d,s)$-conditionally intersecting family.
Choose a collection of bad sets $\{B_1,\ldots, B_t\}$ for some $t$ from $\mathcal{F}$
such that the sets $V_{B_1},\ldots, V_{B_t}$ are pairwise disjoint,
and any other bad set in $\mathcal{F}$ has nonempty intersection with $V_{B_i}$ for some $i\in [t]$.
Note that this can be done by greedy choosing each $B_i$ from $\mathcal{F}$ such that $B_i$ is disjoint from $\bigcup_{j<i}V_{B_j}$,
and by Lemma \ref{lemma:d-s-size-intersection} the set $V_{B_i}$ is also disjoint from $\bigcup_{j<i}V_{B_j}$.

Now let $V_i=V_{B_i}$ and $H_i=H_{B_i}$ for $i \in [t]$.
Let $Z=\bigcup_{i\in[t]}V_i$ and $Y=[n]-Z$.
Let $\mathcal{B}=\bigcup_{i\in[t]}H_i$, $\mathcal{H}=\mathcal{F}\cap \binom{Y}{k}$ and
$\mathcal{S}=\mathcal{F}-\mathcal{B}-\mathcal{H}$.
Suppose that $S\in \mathcal{S}$. Then by Lemma \ref{lemma:d-s-size-intersection},
either $|S\cap V_i|=0$ or $|S\cap V_i|\ge d$ for every $i\in[t]$,
and this completes the proof of Theorem \ref{thm:d-2k+d-3-structure}.
\end{proof}

\section{Applications}
In this section we show some applications of Theorems \ref{thm:d-2k+d-3-structure} and \ref{thm:k-2k-structure}
by giving new proofs to Theorems \ref{thm:upper-bound-d-2k+d-3}, \ref{thm:upper-bound-k-2k}, and \ref{thm:upper-bound-nonintersecting-3-2k}.
First let us prove Theorem \ref{thm:upper-bound-k-2k}.

\begin{proof}[Proof of Theorem \ref{thm:upper-bound-k-2k}]
Suppose that $\mathcal{F}$ is a $(k,2k)$-conditionally intersecting family on $[n]$.
Let $Y,Z,\mathcal{B} \text{ and }\mathcal{H}$ be given by Theorem \ref{thm:k-2k-structure}.
By Theorem \ref{thm:Bollobas-unique-subset}, $\mathcal{H}\le \binom{|Y|-1}{k-1}$.
On the other hand, it is easy to see that $|\mathcal{B}| = (k+1) \times |Z|/(k+1) = |Z|$.
Therefore, $|\mathcal{F}|=|\mathcal{H}|+|\mathcal{B}|\le \binom{|Y|-1}{k-1}+|Z|\le \binom{n-1}{k-1}$,
and equality holds only if $Z = \emptyset$.
\end{proof}

Now we apply Theorem \ref{thm:d-2k+d-3-structure} to prove Theorem \ref{thm:upper-bound-d-2k+d-3}.

\begin{proof}[Proof of Theorem \ref{thm:upper-bound-d-2k+d-3}]
Let $\mathcal{F}$ be a $(d,2k+d-3)$-conditionally intersecting family on  $n\ge 3k^5$ vertices.
Let $Y,Z,\mathcal{B},\mathcal{H} \text{ and }\mathcal{S}$ be given by Theorem \ref{thm:d-2k+d-3-structure}.
Let $v_i = |V_i|$ for $i\in[t]$.
Let $Y_0=Y$ and $Y_i=Y_{i-1}\cup V_i$ for $i\in [t]$ and let $y_i = |Y_i|$ for $0 \le i \le t$.
Define $\mathcal{H}_i=\mathcal{F}\cap \binom{Y_i}{k}$ and let $h_i = |\mathcal{H}_i|$.
By Lemma \ref{lemma:d-s-size-intersection},
every set $H\in\mathcal{H}_i$ is either disjoint from $V_i$ or has an intersection of size at least $d$ with $V_i$.
Therefore, $|\mathcal{H}_i|\le |\mathcal{H}_{i-1}|+\sum_{\ell=d}^{k}\binom{v_i}{\ell}\binom{y_{i-1}}{k-\ell}$.
Inductively, we obtain
\begin{align}
|\mathcal{F}|
\le |\mathcal{H}|+\sum_{i=0}^{t-1}\sum_{\ell=d}^{k}\binom{v_{i+1}}{\ell}\binom{y_{i}}{k-\ell}\le \binom{y_0-1}{k-1}+\sum_{i=0}^{t-1}\sum_{\ell=d}^{k}\binom{2k}{\ell}\binom{n-k-1}{k-\ell}. \notag
\end{align}
Since $\binom{2k}{\ell}\binom{n-k-1}{k-\ell}\ge \binom{2k}{\ell+1}\binom{n-k-1}{k-\ell-1}$, we obtain
\begin{align}
|\mathcal{F}|
& \le \binom{y_0-1}{k-1}+\sum_{i=0}^{t-1}(k-d)\binom{2k}{d}\binom{n-k-1}{k-d} \notag\\
&\le \binom{y_0-1}{k-1}+(k-d)\binom{2k}{d}\binom{n-k-1}{k-d}\frac{n-y_0}{k+1} \notag\\
&\le \binom{y_0-1}{k-1}+\binom{2k}{3}\binom{n-k-1}{k-3}(n-y_0). \notag
\end{align}
Now let $\delta=\left({2\binom{2k}{3}}\right)^{-1}$.
If $n-y_0\le \delta n$, then
\begin{align}
|\mathcal{F}|< \binom{n-1}{k-1}-k\binom{n-k-1}{k-2}+\frac{n}{2}\binom{n-k-1}{k-3}<\binom{n-1}{k-1}, \notag
\end{align}
and we are done.
Therefore, we may assume that $y_0\le \left(1-\delta\right)n$.
Then
\begin{align}
|\mathcal{F}|\le \left(1-\frac{1}{4\binom{2k}{3}}\right)\binom{n-1}{k-1}+\binom{n-k-1}{k-3}\frac{n}{2}\le \binom{n-1}{k-1}, \notag
\end{align}
and this completes the proof of Theorem \ref{thm:upper-bound-d-2k+d-3}.
\end{proof}

The remaining part of this section is devoted to prove Theorem \ref{thm:upper-bound-nonintersecting-3-2k}.
We will use the following lemma in our proof.

The \textit{shadow} $\partial\mathcal{H}$ of a family $\mathcal{H}\subset \binom{[n]}{k}$ is defined as follows:
\begin{align}
\partial\mathcal{H} = \left\{G\in \binom{[n]}{k-1}: \exists H\in \mathcal{H} \text{ such that } G\subset H \right\}. \notag
\end{align}

\begin{lemma}\label{lemma:shadow-unique-subset}
Suppose that $\mathcal{H}\subset \binom{[n]}{k}$, and every set $H\in \mathcal{H}$ has a unique $(k-1)$-subset
$G(H)\subset H$. Then
\begin{align}
|\mathcal{H}|\le \frac{n-k+1}{n}|\partial\mathcal{H}|. \notag
\end{align}
\end{lemma}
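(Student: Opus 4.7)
The plan is to prove the claim by a double-counting argument that sharpens the ordinary local LYM inequality using the uniqueness hypothesis. For each $G\in\partial\mathcal{H}$, write $d(G)=|\{H\in\mathcal{H}:G\subset H\}|$ and split $\partial\mathcal{H}=U\cup S$, where $U=\{G\in\partial\mathcal{H}:d(G)=1\}$ and $S=\{G\in\partial\mathcal{H}:d(G)\ge 2\}$. The first observation is that the hypothesis produces an injection $\mathcal{H}\hookrightarrow U$ via $H\mapsto G(H)$: the image lies in $U$ because by assumption no set of $\mathcal{H}$ other than $H$ contains $G(H)$, and the map is injective because $G(H)=G(H')$ would force $H=H'$ for the same reason. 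In particular, $|U|\ge |\mathcal{H}|$.

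The second step is the standard double-counting identity
$k|\mathcal{H}|=\sum_{G\in\partial\mathcal{H}}d(G)=|U|+\sum_{G\in S}d(G)$,
obtained by counting incident pairs $(G,H)$ with $G\subset H\in\mathcal{H}$. Combined with the trivial bound $d(G)\le n-k+1$, which holds because $G$ has exactly $n-k+1$ extensions to a $k$-set in $\binom{[n]}{k}$, one obtains $\sum_{G\in S}d(G)\le(n-k+1)|S|$, and hence $|S|\ge\bigl(k|\mathcal{H}|-|U|\bigr)/(n-k+1)$.

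Adding the two contributions gives $|\partial\mathcal{H}|=|U|+|S|\ge\bigl((n-k)|U|+k|\mathcal{H}|\bigr)/(n-k+1)$, and substituting $|U|\ge|\mathcal{H}|$ collapses the right-hand side to $n|\mathcal{H}|/(n-k+1)$, which rearranges to the claimed inequality. I do not expect a serious technical obstacle; the only real content is the injection $H\mapsto G(H)$ provided by the uniqueness hypothesis, which is precisely what upgrades the routine ratio $(n-k+1)/k$ coming from local LYM to the sharper $(n-k+1)/n$. As a sanity check, for the star $\mathcal{H}=\{H\in\binom{[n]}{k}:1\in H\}$ every $H$ has the unique subset $H-\{1\}$, and one computes $|\mathcal{H}|=\binom{n-1}{k-1}$ and $|\partial\mathcal{H}|=\binom{n}{k-1}$, so equality holds throughout.
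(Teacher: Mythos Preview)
Your proof is correct and is essentially the same double-counting argument as the paper's. The paper packages it as a weighted count, assigning weight $1$ to the pair $(G(H),H)$ and weight $(n-k+1)^{-1}$ to every other pair $(G,H)$, which yields $\bigl(1+\tfrac{k-1}{n-k+1}\bigr)|\mathcal{H}|\le|\partial\mathcal{H}|$ in one line; your explicit split $\partial\mathcal{H}=U\cup S$ together with the injection $H\mapsto G(H)\in U$ is the unweighted translation of exactly the same idea.
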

\begin{proof}
Consider a weight function $\omega(G,H)$ for all pairs $G\subset H\in \mathcal{F}$ with $|G|=k-1$.
For every $G \in \partial \mathcal{H}$ and every $H \in \mathcal{H}$ assign weight 1 to $(G,H)$ if $G=G(H)$ and $(n-k+1)^{-1}$ if $G\neq G(H)$.
Then an easy double counting gives
\begin{align}
\left(1+\frac{k-1}{n-k+1}\right)|\mathcal{H}|=\sum_{(G,H)}\omega(G,H)\le|\partial\mathcal{H}|, \notag
\end{align}
which implies $|\mathcal{H}|\le (n-k+1)|\partial \mathcal{H}| / n$.
\end{proof}

\begin{definition}
Let $\mathcal{F}\subset \binom{[n]}{k}$ and $S \subset [n]$.
Then $\mathcal{F}$ is a {full star} on $S$ if it is the collection of all $k$-subsets of $S$ that contain a fixed vertex $v$,
and $\mathcal{F}$ is a {star} if it is a subfamily of some full star on $S$.
In either case, we call $v$ the {core} of $\mathcal{F}$.
\end{definition}

Now we prove Theorem \ref{thm:upper-bound-nonintersecting-3-2k}.

\begin{proof}[Proof of Theorem \ref{thm:upper-bound-nonintersecting-3-2k}]
Let $n\ge 3k\binom{2k}{k}$ and
let $\mathcal{F}$ be a family on $[n]$ such that $\mathcal{F}$ is $(3,2k)$-conditionally intersecting but not intersecting.
Suppose that $B\in\mathcal{F}$ is a bad set.
Let $V_B,H_B$ be as defined at the beginning of this section and
let $\mathcal{F}'=\mathcal{F}\cap \binom{[n]-V_B}{k}$.
Since $\mathcal{F}'$ is also $(3,2k)$-intersecting, by result in \cite{mubayi2006erdos},
$|\mathcal{F}'|\le \binom{n-|V_B|-1}{k-1}|\le \binom{n-k-2}{k-1}$.
Then by Lemma \ref{lemma:d-s-size-intersection},
\begin{align}
|\mathcal{F}|
& \le |\mathcal{F}'|+\sum_{i=3}^{k}\binom{2k}{i}\binom{n-k-1}{k-i} \notag\\
&\le \binom{n-k-2}{k-1}+k\binom{2k}{3}\binom{n-k-1}{k-3} \notag\\
&=\binom{n-k-1}{k-1}-\left(\binom{n-k-2}{k-2}-k\binom{2k}{3}\binom{n-k-1}{k-3}\right)<\binom{n-k-1}{k-1}+1, \notag
\end{align}
and we are done.
So we may assume that every $F\in\mathcal{F}$ has a unique $(k-1)$-subset $G(F)$.

Since $\mathcal{F}$ is not intersecting, there exist two disjoint sets $A,B \text{ in } \mathcal{F}$.
Assume that $A=\{a_1,\ldots,a_k\}$ and $B=\{b_1,\ldots,b_k\}$.
Let $I=\{a_1,\ldots,a_k,b_1,\ldots,b_k\}$ and let $U=[n]-I$.
For every set $C\subset U$ of size at most $k-1$ define the family $\mathcal{F}(C)$ on $I$ as follows:
\begin{align}
\mathcal{F}(C)= \left\{F- C: F\in \mathcal{F} \text{ and } F\cap U=C \right\}. \notag
\end{align}
For every $i\in\{0,1,...,k\}$ let
\begin{align}
\mathcal{F}_i=\{F\in\mathcal{F}: |F\cap I|=i\}. \notag
\end{align}
First notice that $\mathcal{F}_k=\{A,B\}$,
since any extra edge in $\mathcal{F}_{k}$ together with $A,B$ would form a $3$-cluster in $\mathcal{F}$.
Next, we will prove
\begin{align}\label{inequality-main-goal}
\sum_{i=0}^{\ell}|\mathcal{F}_i|\le\sum_{i=1}^{\ell}\binom{n-2k}{k-i}\binom{k-1}{i-1}.
\end{align}
for all $\ell\in[k]$.
Suppose that $(\ref{inequality-main-goal})$ is true, then by letting $\ell =k$ we obtain
\begin{align}
|\mathcal{F}|=\sum_{i=0}^{k}|\mathcal{F}_i|\le \sum_{i=1}^{k-1}\binom{n-2k}{k-i}\binom{k-1}{i-1}+2=\binom{n-k-1}{k-1}+1, \notag
\end{align}
and this will complete the proof of Theorem \ref{thm:upper-bound-nonintersecting-3-2k}.
One could compare $(\ref{inequality-main-goal})$ with a similar inequality in \cite{mubayi2006erdos}, which is
\begin{align}\label{inequality-dhruv}
|\mathcal{F}|\le \sum_{\ell=1}^{k}\binom{n-tk}{k-\ell}\binom{tk-1}{\ell-1}=\binom{n-1}{k-1},
\end{align}
where $t$ is the maximum number of pairwise disjoint sets in $\mathcal{F}$.
For the case $t=2$, the summand in $(\ref{inequality-dhruv})$ is $\binom{n-2k}{k-\ell}\binom{2k-1}{\ell-1}$,
but the summand  in $(\ref{inequality-main-goal})$ is $\binom{n-2k}{k-\ell}\binom{k-1}{\ell-1}$,
which is smaller when $\ell \ge 2$.

\begin{claim}\label{lemma:F-cap-U-is-unique-subset}
Let $F\in\mathcal{F}_1$. Then the set $F\cap U$ is a unique $(k-1)$-subset of $F$ in $\mathcal{F}$.
\end{claim}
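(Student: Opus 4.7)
My plan is to prove the claim by contradiction. Suppose $F \cap U$ is not a unique $(k-1)$-subset of $F$ in $\mathcal{F}$. Then there is a distinct set $F' \in \mathcal{F}$ containing $F \cap U$; since $|F \cap U| = k-1$ and $|F'| = k$, I may write $F' = (F \cap U) \cup \{x\}$ for some vertex $x \notin F \cap U$. Writing $\{y\} = F \cap I$, the condition $F' \neq F$ forces $x \neq y$, and by symmetry I may assume $y \in A$.

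The key step will be to show that the triple $\{F, F', A\}$ is a $3$-cluster in $\mathcal{F}$, contradicting the $(3, 2k)$-conditionally intersecting hypothesis. For the intersection, $F \cap F' = F \cap U$ (because $x \neq y$), and since $F \cap U \subseteq U$ is disjoint from $A \subseteq I$, we get $F \cap F' \cap A = \emptyset$. For the union, $F \cup F' = (F \cap U) \cup \{x, y\}$ has size $k+1$; since $y \in A \cap (F \cup F')$, we have $|A \setminus (F \cup F')| \le k-1$, so $|F \cup F' \cup A| \le (k+1) + (k-1) = 2k$. Pairwise distinctness of $F, F', A$ is immediate: $F \neq F'$ by assumption, and $A$ differs from both since $A \subseteq I$ while each of $F, F'$ meets $U$ in exactly $k-1$ vertices.

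The only point requiring a little care is that the vertex $x$ could lie in $A \setminus \{y\}$, in $B$, or in $U$ (the last case putting $F' \in \mathcal{F}_0$), so a priori different choices of third set might be needed. However, the two facts driving the computation, namely $x \neq y$ and $(F \cap U) \cap A = \emptyset$, are independent of the location of $x$, so the single choice $\{F, F', A\}$ handles every case uniformly (and the symmetric triple $\{F, F', B\}$ works if instead $y \in B$). I do not anticipate any genuine obstacle: the argument reduces to one short cluster calculation once the correct auxiliary set from $\{A, B\}$ has been identified.
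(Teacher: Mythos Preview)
Your proof is correct and follows essentially the same approach as the paper: assume a second set $F'\supset F\cap U$ exists and verify that $\{F,F',A\}$ (with $A$ chosen so that $F\cap I\subset A$) is a $3$-cluster. One tiny imprecision: when $x\in U$ the set $F'$ meets $U$ in $k$ vertices rather than $k-1$, but this does not affect the distinctness argument, since $F'\cap U\neq\emptyset$ still forces $F'\neq A$.
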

\begin{proof}[Proof of Claim \ref{lemma:F-cap-U-is-unique-subset}]
Without lose of generality, we may assume that $F=\{a_1,f_1,\ldots,f_{k-1}\}$, where $f_1,\ldots,f_{k-1}$ are contained in $U$.
Suppose that there is another edge $F'\in \mathcal{F}$ containing  $\{f_1,\ldots,f_{k-1}\}$.
Then the three sets $A,F,F'$ form a $3$-cluster in $\mathcal{F}$, a contradiction.
Therefore, $F\cap U = \{f_1,\ldots,f_{k-1}\}$ is a unique $(k-1)$-subset of $F$ in $\mathcal{F}$.
\end{proof}

Now we prove $(\ref{inequality-main-goal})$ for $\ell=1$.
Let us consider the family $\mathcal{F}_0\cup \mathcal{F}_1$.
Define
\begin{align}
\mathcal{M}=\left\{G\in \binom{U}{k-1}: \exists F\in \mathcal{F}_0\cup \mathcal{F}_1 \text{ such that } G\subset F \right\}. \notag
\end{align}
By assumption, every set $F\in\mathcal{F}_0\cup \mathcal{F}_1$
has a unique $(k-1)$-subset $G(F)$, and by Claim \ref{lemma:F-cap-U-is-unique-subset},
we may assume that  $G(F) \subset U$.
Let $\mathcal{G}=\left\{G(F): F\in \mathcal{F}_1 \right\}$.
For every set $F_1\in \mathcal{F}_1$, the set $G(F_1)$ cannot be contained in $\partial\mathcal{F}_0$,
since otherwise one could easily find a $3$-cluster.
Therefore, $\mathcal{G}$ and $\partial\mathcal{F}_0$ are disjoint.
Since $|\mathcal{G}|=|\mathcal{F}_1|$, by Lemma \ref{lemma:shadow-unique-subset}, we have
\begin{align}
\frac{|U|}{|U|-k+1}|\mathcal{F}_0|+|\mathcal{F}_1|\le |\mathcal{M}| \le \binom{n-2k}{k-1}, \notag
\end{align}
and hence $|\mathcal{F}_0|+|\mathcal{F}_1|\le \binom{n-2k}{k-1}$.

To prove $(\ref{inequality-main-goal})$ for $\ell \ge 2$,
we need to give an upper bound for $|\mathcal{F}_i|$ for every $2 \le i \le k-1$.
Since $|\mathcal{F}_i|=\sum_{C\in \binom{U}{k-i}}|\mathcal{F}(C)|$,
it suffices to give an upper bound for $|\mathcal{F}(C)|$ for every $C\in\binom{U}{k-i}$.
Unfortunately, the inequality $|\mathcal{F}(C)|\le \binom{k-1}{i-1}$ is not true in general.
So, in our proof, we will build a relationship between $\mathcal{F}_i$ and $\bigcup_{j<i}\mathcal{F}_j$
and then use this relation to prove $(\ref{inequality-main-goal})$.

The basic idea in our proof is showing that if $|\mathcal{F}(C)|$ is bigger than its expected value $\binom{k-1}{k- |C| -1}$,
then there must be many sets $D$ containing $C$ such that the size of $\mathcal{F}(D)$ is smaller than its expected value $\binom{k-1}{k - |D| -1}$.

Let $C\subset U$ be a set of size at most $k-2$.
We say $C$ is \textit{perfect} if the family $\mathcal{F}(C)$ is a full star on either $A$ or $B$.
Let $D\subset U$ be a set of size $k-1$.
We say $D$ is \textit{perfect} if there exists a set $F$ in $\mathcal{F}$ that contains $D$.

For every $i\in [k-1]$
let $\mathcal{P}_i$ be the collection of all perfect sets in $\binom{U}{k-i}$,
and let $\mathcal{N}_i$ be the collection of non-perfect sets in $\binom{U}{k-i}$.
Let $p_i=|\mathcal{P}_i|$ and $n_i=|\mathcal{N}_i|$ for $i \in [k-1]$ and notice that $p_i+n_i=\binom{|U|}{k-i}$.

For every $i\in \{2,\ldots, k-1\}$
let $\mathcal{P}'_i$ denote the collection of all sets $C\in \binom{U}{k-i}$ such that $C$ is contained in a perfect set in $\binom{U}{k-i+1}$,
and let $\mathcal{N}'_i$ denote the collection all of sets  $D\in \binom{U}{k-i}$ such that $D$ is not contained in any perfect set in $\binom{U}{k-i+1}$.
Let $p'_i=|\mathcal{P}'_i|$ and $n_{i}'=|\mathcal{N}'_i|$ for $i\in \{2,\ldots, k-1\}$.
Let $\mathcal{G}_i=\mathcal{N}_i\cap \mathcal{P}'_i$ and $\mathcal{B}_i=\mathcal{N}_i\cap \mathcal{N}'_i$,
and let $g_i=|\mathcal{G}_i|$ and $b_i=|\mathcal{B}_i|$  for $i\in \{2,\ldots, k-1\}$.
Let  $\mathcal{G}_1=\mathcal{N}_1$, and let $g_1=n_1$, $b_1=0$.
Note that by definition, $b_i+g_i=n_i$ and $n'_i\ge b_i$ for $i \in [k-1]$.

By the definition of perfect sets, $|\mathcal{F}(C)| = \binom{k-1}{i-1}$ for all $C\in \mathcal{P}_i$.
Later we will show that $|\mathcal{F}(C)| < \binom{k-1}{i-1}$ for all $C\in \mathcal{G}_i$.
For every $C\in \mathcal{B}_i$ it could be true that $|\mathcal{F}(C)| > \binom{k-1}{i-1}$.
However, for every $C\in \mathcal{B}_i$ there are either many sets in $\mathcal{G}_{i-1}$ containing $C$,
which means that there are many sets $D\in \binom{U}{k-i+1}$ with $|\mathcal{F}(D)|$ smaller than its expected value,
or there are many sets in $\mathcal{B}_{i-1}$, in which case we turn to consider sets in $\binom{U}{k-i+2}$ and repeat
this argument until we end up with many sets $P$ in $\binom{U}{k-1}$ with $|\mathcal{F}(P)|$ smaller than its expected value.

The next claim gives a relation between $n_i$ and $b_{i+1}$.

\begin{claim}\label{lemma:ni-bi+1}
For every $i\in [k-2]$ we have
\begin{align}
n_i\ge\frac{n-3k}{k}b_{i+1}. \notag
\end{align}
\end{claim}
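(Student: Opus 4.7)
The plan is to prove the inequality by a single double counting argument. I would count pairs $(D,C)$ with $D\in\mathcal{B}_{i+1}$, $C\in\mathcal{N}_i$, and $D\subset C$, and compare the two one-sided estimates.

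For the lower bound on the number of such pairs, the key step is to unpack the definition of $\mathcal{B}_{i+1}\subset\mathcal{N}'_{i+1}$: by definition of $\mathcal{N}'_{i+1}$, no set in $\binom{U}{k-i}$ containing $D$ is perfect, so every $C\in\binom{U}{k-i}$ with $C\supset D$ automatically lies in $\mathcal{N}_i$. Since $|U|=n-2k$ and $|D|=k-i-1$, the number of such $C$ equals $(n-2k)-(k-i-1)=n-3k+i+1\ge n-3k+1$. Summing over $D\in\mathcal{B}_{i+1}$ yields at least $b_{i+1}(n-3k+1)$ pairs.

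For the upper bound I would simply note that each $C\in\mathcal{N}_i$ has $|C|=k-i\le k$ subsets of size $k-i-1$, so it participates in at most $k$ pairs, giving an upper bound of $k\cdot n_i$. Combining the two estimates gives
\begin{align}
n_i\ge\frac{(n-3k+1)\,b_{i+1}}{k}\ge\frac{n-3k}{k}\,b_{i+1}, \notag
\end{align}
which is the desired inequality.

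I do not anticipate any serious obstacle here. The argument is a one-line double counting once the definitions are unfolded, and the only subtlety is recognizing that membership in $\mathcal{N}'_{i+1}$ forces every $(k-i)$-superset in $U$ to be non-perfect, which is exactly what the definition says. The factor $(n-3k)/k$ in the claim simply reflects $|U|=n-2k$ together with the loss of $k$ in the trivial upper bound per $C$.
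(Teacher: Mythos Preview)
Your proposal is correct and essentially matches the paper's proof: both are the same double-counting of pairs $(D,C)$ with $D$ a $(k-i-1)$-set in (a subset of) $\mathcal{N}'_{i+1}$ and $C\supset D$ a non-perfect $(k-i)$-set. The paper phrases the lower bound over all of $\mathcal{N}'_{i+1}$ and then invokes $n'_{i+1}\ge b_{i+1}$, and uses the slightly sharper per-$C$ bound $k-i$ instead of your $k$, but these are cosmetic differences.
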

\begin{proof}[Proof of Claim \ref{lemma:ni-bi+1}]
Let $C\in \mathcal{N}'_{i+1}$, and let $u\in U-C$.
By definition $C\cup\{u\}$ is a non-perfect set in $\binom{U}{k-i}$.
Therefore, we have $(k-i)n_i\ge n'_{i+1}(n-3k+i+1)\ge b_{i+1}(n-3k)$.
It follows that $n_i\ge (n-3k)b_{i+1}/k$.
\end{proof}

\begin{claim}\label{lemma:l-perfect-star-to-l-1-star}
The following statement holds for all $\ell\ge (k+1)/2$.
Suppose that $C\subset U$ is a perfect set of size $\ell$, and $\mathcal{F}(C)$ is a full star on $A$ (or on $B$) with core $v$.
Then for every $(\ell-1)$-subset $C'$ of $C$ the family $\mathcal{F}(C')$ is a star on $A$ (or on $B$) with core $v$.
\end{claim}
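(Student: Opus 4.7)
The plan is to fix an arbitrary $F \in \mathcal{F}$ with $F \cap U = C'$, write $F = C' \cup F'_A \cup F'_B$ where $F'_A = F \cap A$ and $F'_B = F \cap B$, and then establish $F'_B = \emptyset$ and $v \in F'_A$ separately. In both parts, the key auxiliary edge is $G = C \cup \{v\} \cup S \in \mathcal{F}$, available for every $(k-\ell-1)$-subset $S \subset A - \{v\}$ because $\mathcal{F}(C)$ is the full star on $A$ with core $v$ and $C = C' \cup \{u\}$ for some $u \in U$. The proof proceeds by producing a $3$-cluster (empty intersection, union of size at most $2k$) whenever the desired inclusion fails, contradicting the $(3,2k)$-conditionally intersecting property. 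Distinctness of the three edges will be automatic since $F$ and $G$ each meet $U$ nontrivially, whereas $A, B \subset I$.

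For the first step, if $F'_B \neq \emptyset$ I would first try the triple $\{F, B, G\}$: the intersection is empty because $B \cap G = \emptyset$, and the union equals $B \cup C \cup \{v\} \cup S \cup F'_A$, which has size exactly $2k$ provided $F'_A \subset \{v\} \cup S$. This inclusion can be arranged by an appropriate choice of $S$ whenever $v \in F'_A$ (absorb $F'_A - \{v\}$ into $S$) or $|F'_A| \leq k - \ell - 1$. The only remaining subcase is $v \notin F'_A$ together with $|F'_A| = k - \ell$, which forces $|F'_B| = 1$; for this I would switch to the triple $\{F, A, G\}$, pick $S \subset A - \{v\} - F'_A$ (possible since $|A - \{v\} - F'_A| = \ell - 1 \geq k - \ell - 1$) to make $F \cap A \cap G = F'_A \cap S = \emptyset$, and observe that the union $A \cup C \cup F'_B$ has size $k + \ell + 1 \leq 2k$.

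For the second step, with $F' \subset A$ in hand, assume $v \notin F'$ and apply the triple $\{F, A, G\}$ once more. The intersection reduces to $F' \cap S$, which vanishes when $S \subset A - \{v\} - F'$; this is feasible exactly when $k - \ell - 1 \leq |A - \{v\} - F'| = \ell - 2$, which is precisely the hypothesis $\ell \geq (k+1)/2$. The union then simplifies to $A \cup C$ of size $k + \ell \leq 2k$, producing the required $3$-cluster. The parallel statement for a full star on $B$ follows by symmetry. The main obstacle I expect is the bookkeeping in the first step, since the empty-intersection and union-at-most-$2k$ conditions cannot be met simultaneously by a single triple, forcing the split between $\{F, B, G\}$ and $\{F, A, G\}$; the threshold $\ell \geq (k+1)/2$ is tight in the second step, where the shadow $A - \{v\} - F'$ is forced to have size at least $k - \ell - 1$ to accommodate $S$.
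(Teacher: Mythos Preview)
Your proposal is correct and follows essentially the same approach as the paper: both arguments fix an edge $F=C'\cup E'$ with $E'\in\mathcal{F}(C')$, pick an auxiliary edge $G=C\cup E$ with $E\in\mathcal{F}(C)$ (your $\{v\}\cup S$), and produce a $3$-cluster among $\{F,G,A\}$ or $\{F,G,B\}$ according to how $E'$ meets $A$. The only cosmetic difference is the case split: the paper branches directly on whether $v\in E'\cap A$ in the mixed case, whereas you additionally separate out $|F'_A|\le k-\ell-1$ (which absorbs the paper's ``$E'\subset B$'' case); both routes use the hypothesis $\ell\ge (k+1)/2$ at exactly the same spot, namely to fit $S$ inside $A-\{v\}-F'_A$.
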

\begin{proof}[Proof of Claim \ref{lemma:l-perfect-star-to-l-1-star}]
Let $C \subset U$ such that $\mathcal{F}(C)$ is a full star on $A$ with core $v \in A$.
Without loss of generality we may assume that $v = a_1$.
Let $E'\in \mathcal{F}(C')$.
If $E'\subset B$, then choose a set $E$ from $\mathcal{F}(C)$, and the three sets $E\cup C, E'\cup C', B$ form a $3$-cluster in $\mathcal{F}$, a contradiction.
If $ E'\cap A\neq \emptyset$ and  $E'\cap B\neq \emptyset$, then let $x=|E'\cap A|$ and $y=|E'\cap B|$.
Since $x+y=k-\ell+1$, we have $x\le k-\ell$ and $y\le k-\ell$.
If $a_1\not\in E'\cap A$, then by the assumption that $\ell \ge (k+1)/2$ and $\mathcal{F}(C)$ is a full star, there exists a set $E\in\mathcal{F}(C)$ such that $(E'\cap A)\cap E=\emptyset$.
So the three sets $E'\cup C', E\cup C, A$ form a $3$-cluster in $\mathcal{F}$, a contradiction.
If $a_1 \in E'\cap A$, then by assumption there exists a set $E\in\mathcal{F}(C)$ such that $E'\cap A\subset E$.
However, the three sets $E\cup C, E'\cup C', B$ form a $3$-cluster in $\mathcal{F}$, a contradiction.
Therefore,  every set in $\mathcal{F}(C')$ is completely contained in $A$.

Next, we show that every set $E'\in \mathcal{F}(C')$ contains $a_1$.
Suppose there exists a set $E'\in\mathcal{F}(C')$ such that $a_1\not\in E'$.
By assumption we have $k-\ell+1+k-\ell\le k$, so there exists a set $E\in\mathcal{F}(C)$ such that $E\cap E'=\emptyset$.
However, the three sets  $E'\cup C', E\cup C, A$ form a $3$-cluster in $\mathcal{F}$, a contradiction.
Therefore, the family $\mathcal{F}(C')$ is a star on $A$ with core $a_1$.
\end{proof}

For every $i\in [k-1]$ let $w_i=\binom{k-1}{i-1}\binom{n-2k}{k-i}$ and $k_i=\binom{2k}{i}-\binom{k-1}{i-1}+1$.
Our next claim gives an upper bound for $|\mathcal{F}_i|$ for $2\le i\le (k+1)/2$.

\begin{claim}\label{lemma:upper-bound-Fi-less-k+1/2}
For every $i$ satisfying $2\le i\le (k+1)/2$ we have
\begin{align}
|\mathcal{F}_i|\le w_i+k_ib_i-n_i. \notag
\end{align}
\end{claim}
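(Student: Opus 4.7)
The strategy is to split $|\mathcal{F}_i| = \sum_{C\in\binom{U}{k-i}} |\mathcal{F}(C)|$ along the trichotomy $\binom{U}{k-i} = \mathcal{P}_i \sqcup \mathcal{G}_i \sqcup \mathcal{B}_i$ and bound $|\mathcal{F}(C)|$ separately on each piece. Two of the three bounds are immediate from the definitions: for $C \in \mathcal{P}_i$, the family $\mathcal{F}(C)$ is by definition a full star on $A$ or $B$, so $|\mathcal{F}(C)| = \binom{k-1}{i-1}$; and for $C \in \mathcal{B}_i$, since every member of $\mathcal{F}(C)$ is an $i$-subset of $I = A\cup B$, the trivial bound $|\mathcal{F}(C)| \le \binom{2k}{i}$ will be enough.

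The real content is the middle case. For $C \in \mathcal{G}_i$ I would show $|\mathcal{F}(C)| \le \binom{k-1}{i-1} - 1$. By the definition of $\mathcal{G}_i$, the set $C$ is non-perfect and sits in some perfect $D \in \binom{U}{k-i+1}$; the hypothesis $2 \le i \le (k+1)/2$ ensures $|D| = k-i+1 \ge (k+1)/2$, so Claim \ref{lemma:l-perfect-star-to-l-1-star} applies and forces $\mathcal{F}(C)$ to be a star on $A$ (or $B$) with the same core as the full star $\mathcal{F}(D)$. Since $C$ is itself non-perfect, $\mathcal{F}(C)$ is a proper subfamily of that full star of size $\binom{k-1}{i-1}$, yielding the needed strict drop by at least one.

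Putting the three estimates together and simplifying with $p_i + g_i + b_i = \binom{n-2k}{k-i}$ and $g_i = n_i - b_i$, the sum rearranges into $|\mathcal{F}_i| \le w_i - n_i + k_i b_i$; the algebraic grouping is arranged so that the coefficient $\binom{2k}{i} - \binom{k-1}{i-1} + 1$ of $b_i$ matches exactly the definition of $k_i$, and the $-g_i$ deficit from $\mathcal{G}_i$ combines with the $b_i$ contribution from $\mathcal{B}_i$ to produce the $-n_i$ term. The main obstacle is the application of Claim \ref{lemma:l-perfect-star-to-l-1-star}: the condition $|D| \ge (k+1)/2$ is precisely the restriction $i \le (k+1)/2$, and a separate argument will be required to handle $i > (k+1)/2$, presumably in a companion claim.
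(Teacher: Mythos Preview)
Your proposal is correct and follows essentially the same route as the paper's proof: both split the sum over $\binom{U}{k-i}=\mathcal{P}_i\sqcup\mathcal{G}_i\sqcup\mathcal{B}_i$, use the definition for $\mathcal{P}_i$, the trivial bound $\binom{2k}{i}$ for $\mathcal{B}_i$, and invoke Claim~\ref{lemma:l-perfect-star-to-l-1-star} (with the observation $|D|=k-i+1\ge (k+1)/2$) to get $|\mathcal{F}(C)|\le\binom{k-1}{i-1}-1$ for $C\in\mathcal{G}_i$, then collect terms via $p_i+g_i+b_i=\binom{n-2k}{k-i}$ and $g_i+b_i=n_i$. Your write-up makes the role of the hypothesis $i\le (k+1)/2$ more explicit than the paper does, but the argument is identical.
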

\begin{proof}[Proof of Claim \ref{lemma:upper-bound-Fi-less-k+1/2}]
Let us give an upper bound for $|\mathcal{F}(C)|$ for every $C\in \binom{U}{k-i}$.
First notice that by definition $|\mathcal{F}(C)|=\binom{k-1}{i-1}$ for all $C\in\mathcal{P}_i$.
By Claim \ref{lemma:l-perfect-star-to-l-1-star}, $|\mathcal{F}(C)|\le\binom{k-1}{i-1}-1$
for all $C\in\mathcal{G}_i$.
On the other hand, it is trivially true that $|\mathcal{F}(C)|\le\binom{2k}{i}$
for all $C\in\mathcal{B}_i$.
Therefore,
\begin{align}
|\mathcal{F}_i|
& = \sum_{C\in\mathcal{P}_i}|\mathcal{F}(C)|+\sum_{C\in\mathcal{G}_i}|\mathcal{F}(C)|+\sum_{C\in\mathcal{B}_i}|\mathcal{F}(C)| \notag\\
&\le \binom{k-1}{i-1}p_i+\left(\binom{k-1}{i-1}-1\right)g_i+\binom{2k}{i}b_i  \notag\\
&=\binom{k-1}{i-1}\binom{n-2k}{k-i}+\left(\binom{2k}{i}-\binom{k-1}{i-1}+1\right)b_i-n_i
= w_i+k_ib_i-n_i. \notag
\end{align}
Here we used that fact that $b_i+g_i=n_i$ and $n_i+p_i=\binom{n-2k}{k-i}$.
\end{proof}

Recall that Claim \ref{lemma:ni-bi+1} says that $n_i\ge (n-3k)b_{i+1}/k$.
Since $n\ge 3k \binom{2k}{k}$ and $k_{i+1} < \binom{2k}{k}$, we have $n_i/2 \ge k_{i+1}b_{i+1}$.
Combining this inequality with Claim \ref{lemma:upper-bound-Fi-less-k+1/2} we obtain the following claim.

\begin{claim}\label{lemma:upper-bound-Fi-less-k+1/2-simple}
For every $\ell$ satisfying  $1\le \ell \le (k+1)/2$ we have
\begin{align}
\sum_{i=0}^{\ell}|\mathcal{F}_i|\le \sum_{i=1}^{\ell}w_i-\sum_{i=1}^{\ell}\frac{n_i}{2}. \notag
\end{align}
\end{claim}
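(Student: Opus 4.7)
The plan is to sharpen the $\ell=1$ bound to $|\mathcal{F}_0|+|\mathcal{F}_1|\le w_1-n_1$, then combine Claim~\ref{lemma:upper-bound-Fi-less-k+1/2} with the inequality $k_ib_i\le n_{i-1}/2$ to obtain a refined bound on each $|\mathcal{F}_i|$ for $i\geq 2$, and finally to sum the resulting estimates so that the $n_i$ terms telescope.

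The base case requires revisiting the shadow argument used to prove inequality~(\ref{inequality-main-goal}) for $\ell=1$. There one considers
\[
\mathcal{M}=\Bigl\{G\in \binom{U}{k-1}:\ \exists\, F\in\mathcal{F}_0\cup\mathcal{F}_1,\ G\subset F\Bigr\}.
\]
The key observation is that $\mathcal{M}$ coincides with the collection $\mathcal{P}_1$ of perfect $(k-1)$-sets in $U$: any $G\in\binom{U}{k-1}$ contained in some $F\in\mathcal{F}$ forces $|F\cap U|\ge k-1$, so $F\in\mathcal{F}_0\cup\mathcal{F}_1$. Hence $|\mathcal{M}|=p_1=\binom{n-2k}{k-1}-n_1=w_1-n_1$, and the shadow/unique-subset inequality already established in the paper now yields
\[
|\mathcal{F}_0|+|\mathcal{F}_1|\le |\mathcal{M}|=w_1-n_1,
\]
which is stronger than the $\ell=1$ case of the claim.

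For $2\le i\le (k+1)/2$, Claim~\ref{lemma:upper-bound-Fi-less-k+1/2} gives $|\mathcal{F}_i|\le w_i+k_ib_i-n_i$. Shifting the index in the inequality $n_i/2\ge k_{i+1}b_{i+1}$ stated just before the claim (a consequence of Claim~\ref{lemma:ni-bi+1} and $n\ge 3k\binom{2k}{k}$), we obtain $n_{i-1}/2\ge k_ib_i$, hence
\[
|\mathcal{F}_i|\le w_i+\frac{n_{i-1}}{2}-n_i.
\]
Summing these bounds over $i=2,\ldots,\ell$ together with the base case and reindexing $\sum_{i=2}^{\ell}n_{i-1}/2=\sum_{j=1}^{\ell-1}n_j/2$, the positive $n_{i-1}/2$ contributions cancel all but one half of each $n_i$, yielding
\[
\sum_{i=0}^{\ell}|\mathcal{F}_i|\le \sum_{i=1}^{\ell}w_i-\sum_{i=1}^{\ell-1}\frac{n_i}{2}-n_\ell\le \sum_{i=1}^{\ell}w_i-\sum_{i=1}^{\ell}\frac{n_i}{2},
\]
where the last step uses $n_\ell/2\le n_\ell$.

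I do not foresee a serious obstacle, since both ingredients are essentially present in the preceding text. The only delicate point is to notice that the inequality driving the telescoping must be $k_ib_i\le n_{i-1}/2$ rather than $k_ib_i\le n_i/2$; this single index shift is what allows the refined bound on $|\mathcal{F}_i|$ to be absorbed by the slack generated at the previous level, producing the clean estimate of the claim.
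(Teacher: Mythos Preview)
Your proposal is correct and follows essentially the same route as the paper: the base case $|\mathcal{F}_0|+|\mathcal{F}_1|\le |\mathcal{M}|=w_1-n_1$ (via $\mathcal{M}=\mathcal{P}_1$) is exactly what the paper invokes, and for $\ell\ge 2$ both arguments combine Claim~\ref{lemma:upper-bound-Fi-less-k+1/2} with the inequality $n_i/2\ge k_{i+1}b_{i+1}$ and use $b_1=0$ to telescope. The only cosmetic difference is that the paper sums $\sum_{i=1}^{\ell}(w_i+k_ib_i-n_i)$ first and then regroups as $\sum w_i-\sum_{i=1}^{\ell-1}(n_i-k_{i+1}b_{i+1})-n_\ell$ before applying the inequality, whereas you apply the shifted inequality $k_ib_i\le n_{i-1}/2$ termwise before summing; the computations are identical.
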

\begin{proof}[Proof of Claim \ref{lemma:upper-bound-Fi-less-k+1/2-simple}]
The case $\ell=1$ follows from the inequality that
\begin{align}
|\mathcal{F}_0|+|\mathcal{F}_1|\le|\mathcal{M}|=\binom{n-2k}{k-1}-n_1. \notag
\end{align}
For $\ell\ge 2$ by Claim \ref{lemma:upper-bound-Fi-less-k+1/2} we obtain
\begin{align}
\sum_{i=0}^{\ell}|\mathcal{F}_i|\le \sum_{i=1}^{\ell}(w_i+k_ib_i-n_i)
=\sum_{i=1}^{\ell}w_i-\sum_{i=1}^{\ell-1}(n_i-k_{i+1}b_{i+1})-n_{\ell} \le  \sum_{i=1}^{\ell}w_i-\sum_{i=1}^{\ell}\frac{n_i}{2}. \notag
\end{align}
\end{proof}

The next step is to extend Claim \ref{lemma:upper-bound-Fi-less-k+1/2-simple} to all $\ell>(k+1)/2$.

\begin{claim}\label{lemma:C-P-perfect-set-implies-star}
Let $C\subset U$ be a set of size $\ell\ge 2$.
Suppose that $\mathcal{F}(C)$ is a full-star on $A$ (or on $B$) with core $v$
and there exists a perfect set $P\in\binom{U}{k-1}$ containing $C$.
Then, for every $(\ell-1)$-subset $C'\subset C$ the family
$\mathcal{F}(C')$ is a star on $A$ (or on $B$) with core $v$.
\end{claim}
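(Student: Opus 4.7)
The approach is to first exploit the 3-cluster-free property to pin down the extension $F^*$ of the perfect set $P$, and then to run the case analysis of Claim~\ref{lemma:l-perfect-star-to-l-1-star}, using $F^*$ to close the two subcases that previously required $\ell \ge (k+1)/2$.

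Without loss of generality assume $\mathcal{F}(C)$ is a full-star on $A$ with core $v = a_1$, and fix $F^* \in \mathcal{F}$ with $F^* \supset P$; write $F^* = P \cup \{w\}$. First, I would show $w = a_1$. If $w \ne a_1$, then since $|A \setminus \{w\}| \ge k - 1 \ge k - \ell$, one can pick $E \in \mathcal{F}(C)$ with $a_1 \in E$ and $w \notin E$. Since $E \subset A$ is disjoint from $F^* \setminus \{w\} \subset U$, the triple $\{A, E \cup C, F^*\}$ has intersection $E \cap \{w\} = \emptyset$ and union $A \cup P \cup \{w\}$ of size at most $2k$, giving a forbidden 3-cluster. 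Hence $F^* = P \cup \{a_1\}$.

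Next, assume for contradiction some $E' \in \mathcal{F}(C')$ violates the star conclusion; set $F' = E' \cup C'$, $x = |E' \cap A|$, $y = |E' \cap B|$. Mirroring the analysis of Claim~\ref{lemma:l-perfect-star-to-l-1-star}: if $E' \subset B$ or if $a_1 \in E' \cap A$ with $E' \cap B \ne \emptyset$, choose $E \in \mathcal{F}(C)$ with $E \supset E' \cap A$ (using $x \le k - \ell$), and the triple $\{B, E \cup C, F'\}$ has empty intersection and union $B \cup E \cup C$ of size $2k$. If $a_1 \notin E'$, $E' \cap B \ne \emptyset$, and $x \le \ell$, choose $E$ disjoint from $E' \cap A$ with $a_1 \in E$ and use $\{A, E \cup C, F'\}$; if $x \ge \ell + 1$ and $y \ge 2$, choose $E \supset (E' \cap A) \cup \{a_1\}$ (using $x + 1 \le k - \ell$) and use $\{B, E \cup C, F'\}$. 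In each case the computations from Claim~\ref{lemma:l-perfect-star-to-l-1-star} produce a 3-cluster.

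The residual subcases, namely $E' \subset A$ with $a_1 \notin E'$, and $a_1 \notin E'$ with $x = k - \ell$ and $y = 1$, are precisely those for which Claim~\ref{lemma:l-perfect-star-to-l-1-star} breaks down when $\ell < (k+1)/2$, and both are settled via $F^* = P \cup \{a_1\}$. Since $a_1 \notin E'$ and $C' \subset U$, we have $\{a_1\} \cap F' = \emptyset$ and $F^* \cap F' = C'$; hence the triple $\{A, F^*, F'\}$ has empty intersection. Its union is $A \cup P$ of size $2k - 1$ in the first subcase, and $A \cup P \cup \{b\}$ of size $2k$ in the second (where $\{b\} = E' \cap B$), again yielding a 3-cluster. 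The main obstacle I anticipate is the first step: once $F^* = P \cup \{a_1\}$ is forced, the rest is essentially bookkeeping, but that forcing argument is what converts the abstract existence of a perfect $(k-1)$-set into a concrete element of $\mathcal{F}$ usable in the critical subcases.
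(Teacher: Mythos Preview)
Your proposal is correct and follows essentially the same route as the paper: first force the extension of $P$ to be $P\cup\{a_1\}$, then run a case analysis on $E'\in\mathcal{F}(C')$ and invoke $F^*=P\cup\{a_1\}$ exactly in the subcases where the argument of Claim~\ref{lemma:l-perfect-star-to-l-1-star} would fail for small $\ell$. The only difference is organizational: the paper splits the mixed case $E'\cap A\neq\emptyset$, $E'\cap B\neq\emptyset$ according to whether $x\le k-\ell-1$ or $x=k-\ell$ (which absorbs your bullet ``$x\ge\ell+1$, $y\ge2$'' into the first subcase, since then $x\le k-\ell-1$ and one can take $E\supset (E'\cap A)\cup\{a_1\}$ directly), whereas you split first on whether $a_1\in E'$ and then on the size of $x$ relative to $\ell$; both partitions are complete and lead to the same three-cluster witnesses.
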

\begin{proof}[Proof of Claim \ref{lemma:C-P-perfect-set-implies-star}]
Let $C \subset U$ be a set of size $\ell$ such that  $\mathcal{F}(C)$ is a full-star on $A$ with core $v$.
Without loss of generality we may assume that $v = a_{1}$.
Let $P \in \binom{U}{k-1}$ be a perfect set containing $C$.
By the definition of perfect set there exists a set $F\in\mathcal{F}$ containing $P$.
Suppose that $F=P\cup \{u\}$, and we want to show that $u=a_1$.
Suppose that $u\not\in A$.
Then for every $E\in\mathcal{F}(C)$ the three sets $A,  F, E\cup C$  form a $3$-cluster in $\mathcal{F}$, a contradiction.
Therefore, $u\in A$.

Now suppose for the contrary that $u \neq a_1$.
Then by assumption there exists a set $E\in\mathcal{F}(C)$ not containing $u$ and, hence,
the three sets $A, F, E\cup C$ form a $3$-cluster in $\mathcal{F}$, a contradiction.
Therefore, $u=a_1$.

Let $C'\subset C$ be a set of size $\ell-1$ and $E' \in \mathcal{F}(C')$.
If $E'\subset B$, then for every $E\in \mathcal{F}(C)$ the three sets $E\cup C, E'\cup C', B$ form a $3$-cluster in $\mathcal{F}$, a contradiction.
If $ E'\cap A\neq \emptyset$ and  $E'\cap B\neq \emptyset$, then let $x=|E'\cap A|$ and $y=|E'\cap B|$.
Since $x+y=k-\ell+1$, we have $x\le k-\ell$ and $y\le k-\ell$.
If $x\le k-\ell-1$, then by assumption there exists a set $E\in\mathcal{F}(C)$ containing $E'\cap A$.
However, the three sets $E\cup C, E'\cup C', B$ form a $3$-cluster in $\mathcal{F}$, a contradiction.
Therefore, we may assume that $x=k-\ell$.
If $a_1\in E'\cap A$, then there exists a set $E\in\mathcal{F}(C)$ such that $E'\cap A= E$.
However, the three sets $E\cup C, E'\cup C', B$ form  a $3$-cluster in $\mathcal{F}$, a contradiction.
If $a_1 \not\in E'\cap A$, then the three sets $A, F, E'\cup C'$ form a $3$-cluster in $\mathcal{F}$, a contradiction.
Therefore, every set in $\mathcal{F}(C')$ is completely contained in $A$.

Suppose that there is a set $E'\in\mathcal{F}(C')$ not containing $a_1$,
then the three sets $A, F, E'\cup C'$ would form a $3$-cluster in $\mathcal{F}$, a contradiction.
Therefore, every set in $\mathcal{F}(C')$ contains $a_1$, and this complete the proof of Claim \ref{lemma:C-P-perfect-set-implies-star}.
\end{proof}

Let $c=\lf(k+1)/2\rf$ and let $m=\lf k/2 \rf$, and notice that $m+c=k$.
The next claim shows that $(\ref{inequality-main-goal})$ holds for $\ell = c+1$.

\begin{claim}\label{lemma:inequality-sum-F-i-to-c+1}
We have
\begin{align}
\sum_{i=0}^{c+1}|\mathcal{F}_i|\le \sum_{i=1}^{c+1}w_i-\sum_{i=1}^{c+1}\frac{n_i}{4}. \notag
\end{align}
\end{claim}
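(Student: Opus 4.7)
The plan is to combine Claim~\ref{lemma:upper-bound-Fi-less-k+1/2-simple} applied at $\ell=c$, which gives $\sum_{i=0}^{c}|\mathcal{F}_i|\le\sum_{i=1}^{c}w_i-\sum_{i=1}^{c}n_i/2$, with a separate bound on $|\mathcal{F}_{c+1}|$. It then suffices to show
\[
|\mathcal{F}_{c+1}|\le w_{c+1}+\frac{1}{4}\sum_{i=1}^{c}n_i-\frac{n_{c+1}}{4},
\]
and I would imitate the proof of Claim~\ref{lemma:upper-bound-Fi-less-k+1/2} by writing $|\mathcal{F}_{c+1}|=\sum_{C\in\binom{U}{m-1}}|\mathcal{F}(C)|$ and partitioning $\binom{U}{m-1}=\mathcal{P}_{c+1}\cup\mathcal{G}_{c+1}\cup\mathcal{B}_{c+1}$.

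The main obstruction is that the containing perfect sets $C^{*}\in\mathcal{P}_c$ have size $m=\lfloor k/2\rfloor<(k+1)/2$, so Claim~\ref{lemma:l-perfect-star-to-l-1-star} is no longer available. Instead I would rely on Claim~\ref{lemma:C-P-perfect-set-implies-star}, which forces $\mathcal{F}(C)$ to be a star provided $C$ sits inside a chain $C\subset C^{*}\subset P$ with $P\in\mathcal{P}_1$. Accordingly, split $\mathcal{G}_{c+1}=\mathcal{G}_{c+1}^{\mathrm{ext}}\cup\mathcal{G}_{c+1}^{\mathrm{non}}$ according to the existence of such a chain. For $C\in\mathcal{G}_{c+1}^{\mathrm{ext}}$, Claim~\ref{lemma:C-P-perfect-set-implies-star} puts $\mathcal{F}(C)$ inside a full star on $A$ or $B$ of size $\binom{k-1}{c}$, and strict containment (since $C$ is non-perfect) yields $|\mathcal{F}(C)|\le\binom{k-1}{c}-1$. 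On $\mathcal{G}_{c+1}^{\mathrm{non}}\cup\mathcal{B}_{c+1}$ only the trivial bound $\binom{2k}{c+1}$ is available.

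To control $\mathcal{G}_{c+1}^{\mathrm{non}}$ I would double count pairs $(C^{*},P)$ with $C^{*}\in\mathcal{P}_c$ non-$(k-1)$-extendable and $P\in\binom{U}{k-1}$ containing $C^{*}$. Non-extendability forces every such $P$ to lie in $\mathcal{N}_1$, whence
\[
|\{C^{*}\in\mathcal{P}_c\,:\,\text{non-extendable}\}|\le \binom{k-1}{m}n_1\bigg/\binom{|U|-m}{c-1},
\]
and therefore $|\mathcal{G}_{c+1}^{\mathrm{non}}|\le m\binom{k-1}{m}n_1/\binom{|U|-m}{c-1}$. For $\mathcal{B}_{c+1}$, Claim~\ref{lemma:ni-bi+1} provides $b_{c+1}\le kn_c/(n-3k)$. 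The hypothesis $n\ge 3k\binom{2k}{k}$ makes $|U|-m$ polynomially large in $n$, so $\binom{2k}{c+1}|\mathcal{G}_{c+1}^{\mathrm{non}}|\le n_1/4$ and $\binom{2k}{c+1}b_{c+1}\le n_c/4$.

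Assembling these pieces I obtain $|\mathcal{F}_{c+1}|\le w_{c+1}+(n_1+n_c)/4-n_{c+1}/4$, and adding the $\ell=c$ bound together with $n_1+n_c\le\sum_{i=1}^{c}n_i$ yields the claim. The hardest step will be the double-counting estimate for $\mathcal{G}_{c+1}^{\mathrm{non}}$ paired with a tight verification that $n\ge 3k\binom{2k}{k}$ suffices to absorb the trivial contributions into the $n_i/4$ slack; the small-$k$ boundary case ($k=3$, where $c+1=k$ forces $\mathcal{F}_{c+1}=\{A,B\}$ of fixed size two) may need separate handling, since the inequality $\binom{2k}{c+1}b_{c+1}\le n_c/4$ becomes essentially tight there and one may have to exploit the fixed value $|\mathcal{F}_k|=2$ instead of the generic per-$C$ bound.
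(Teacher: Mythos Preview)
Your approach is essentially identical to the paper's: you introduce the same split of $\mathcal{G}_{c+1}$ into ``extendable'' and ``non-extendable'' pieces (the paper calls these $\mathcal{S}$ and $\mathcal{T}$, defined via \emph{good} and \emph{bad containers}), invoke Claim~\ref{lemma:C-P-perfect-set-implies-star} on the extendable part, and bound the non-extendable part by the same double counting against $\mathcal{N}_1$. The only difference is bookkeeping: the paper combines the resulting estimate $|\mathcal{F}_{c+1}|\le w_{c+1}+k_{c+1}b_{c+1}+k_{c+1}t-n_{c+1}$ directly with the per-level inequalities of Claim~\ref{lemma:upper-bound-Fi-less-k+1/2} (so that $k_{c+1}b_{c+1}$ is absorbed by $n_c/2$ inside the telescoping sum), whereas you start from the already-summed Claim~\ref{lemma:upper-bound-Fi-less-k+1/2-simple} and must then absorb $k_{c+1}b_{c+1}$ using only the residual $n_c/4$. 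That costs you a factor of two and is exactly why you run into the small-$k$ worry; if you switch to combining with Claim~\ref{lemma:upper-bound-Fi-less-k+1/2} instead, the edge cases evaporate and no separate treatment of $k=3$ is needed.
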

\begin{proof}[Proof of Claim \ref{lemma:inequality-sum-F-i-to-c+1}]
Similar to the proof of Claim \ref{lemma:upper-bound-Fi-less-k+1/2},
for every $C\in\mathcal{P}_{c+1}$ we have $|\mathcal{F}(C)|=\binom{k-1}{c}$,
and for every $C\in\mathcal{B}_{c+1}$ we have $|\mathcal{F}(C)|\le\binom{2k}{c+1}$.

For every perfect set $D\in \binom{U}{m}$ we say that $D$ is a \textit{good container} if $D$ itself is contained in a perfect $(k-1)$-set,
otherwise we say that $D$ is a \textit{bad container}.
Let $\mathcal{S}$ be the collection of all sets in $\mathcal{G}_{c+1}$ that are contained in a good container.
Let $\mathcal{T}$ be the collection of all sets in $\mathcal{G}_{c+1}$ that are not contained in any good container.
Let $s=|\mathcal{S}|$ and $t=|\mathcal{T}|$.
Since every bad container in $\binom{U}{m}$ has $m$ subsets of size $m-1$,
the number of bad containers in $\binom{U}{m}$ is at least $t/m$.

Let $D\in \binom{U}{m}$ be a bad container.
Then for every $E\in \binom{U-D}{k-m-1}$ the set $D\cup E$ is non-perfect in $\binom{U}{k-1}$.
Therefore,  $n_1\ge {\binom{n-2k-m}{c-1}}t/\left( m\binom{k-1}{m} \right)$.
By definition, every set $C\in\mathcal{G}_{c+1}$ is contained in a perfect set $D\in\binom{U}{m}$.
If $C\in \mathcal{S}$, then by Claim \ref{lemma:C-P-perfect-set-implies-star}, $|\mathcal{F}(C)|\le \binom{k-1}{c}-1$.
If $C \in \mathcal{T}$, then it is trivially true that $|\mathcal{F}(C)|\le \binom{k}{c+1}$.
Therefore,
\begin{align}
|\mathcal{F}_{c+1}|
& = \sum_{C\in\mathcal{P}_{c+1}}|\mathcal{F}(C)|
	+\sum_{C\in\mathcal{B}_{c+1}}|\mathcal{F}(C)|
	+\sum_{C\in\mathcal{S}}|\mathcal{F}(C)|
	+\sum_{C\in\mathcal{T}}|\mathcal{F}(C)|  \notag\\
& \le \binom{k-1}{c}p_{c+1}+\binom{2k}{c+1}b_{c+1}+\left(\binom{k-1}{c}-1\right)s+\binom{2k}{c+1}t \notag \\
& =w_{c+1}+k_{c+1}b_{c+1}+k_{c+1}t-n_{c+1}. \notag
\end{align}
Here we used the fact that $s+t=g_{c+1}$, $g_{c+1}+b_{c+1}=n_{c+1}$ and $n_{c+1}+p_{c+1}=\binom{n-2k}{k-c-1}$.
Combining the inequality above with Claim \ref{lemma:upper-bound-Fi-less-k+1/2}, we obtain
\begin{align}
\sum_{i=0}^{c+1}|\mathcal{F}_i|\le \sum_{i=1}^{c+1}(w_i+k_ib_i-n_i)+k_{c+1}t. \notag
\end{align}
Since  $n_1/4\ge k_{c+1}t$ and $n_i/2\ge k_{i+1}b_{i+1}$,
\begin{align}
\sum_{i=0}^{c+1}|\mathcal{F}_i|\le \sum_{i=1}^{c+1}w_i-\sum_{i=1}^{c+1}\frac{n_i}{4}. \notag
\end{align}
\end{proof}

\begin{claim}\label{claim-C-is-contained-in-perfec-set}
Every set $C\subset U$ of size at most $k-c$ is contained in a perfect $(k-1)$-set.
\end{claim}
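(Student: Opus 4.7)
The plan is to prove the claim by contradiction, aiming to produce a $3$-cluster in $\mathcal{F}$. Suppose some $C \subset U$ with $|C| = \ell \le k-c$ is not contained in any perfect $(k-1)$-set. First I would unpack the hypothesis: a $(k-1)$-subset $D \subset U$ is perfect exactly when $D \subset F$ for some $F \in \mathcal{F}$ with $|F \cap I| \le 1$, i.e.\ $F \in \mathcal{F}_0 \cup \mathcal{F}_1$ (for $F \in \mathcal{F}_1$ we use Claim~\ref{lemma:F-cap-U-is-unique-subset} to identify $G(F) = F \cap U$). So the failure means no set of $\mathcal{F}_0 \cup \mathcal{F}_1$ contains $C$, hence every $F \in \mathcal{F}$ containing $C$ satisfies $|F \cap I| \ge 2$.

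Next, I would exploit the disjoint pair $A, B \in \mathcal{F}$. Since $C \subset U$ is disjoint from $A \cup B = I$, inclusion--exclusion gives, for any $F_1, F_2 \in \mathcal{F}$ containing $C$,
\[
|A \cup F_1 \cup F_2| \;=\; 3k - |F_1 \cap F_2| - |F_1 \cap A| - |F_2 \cap A| + |F_1 \cap F_2 \cap A|.
\]
Thus $\{A, F_1, F_2\}$ is a $3$-cluster whenever $F_1 \cap F_2 \cap A = \emptyset$ and $|F_1 \cap F_2| + |F_1 \cap A| + |F_2 \cap A| \ge k$. Using $|F_1 \cap F_2| \ge \ell$ together with the slack $k - \ell \ge c$ from $|C| \le k-c$, the ideal target is a pair $F_1 \in \mathcal{F}$ containing $C$ with $F_1 \setminus C \subset A$ (so that $|F_1 \cap A| = k - \ell \ge c$) and $F_2 \in \mathcal{F}$ containing $C$ with $F_2 \cap A = \emptyset$. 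Such a pair automatically satisfies $F_1 \cap F_2 = C$, $F_1 \cap F_2 \cap A = \emptyset$, and $|F_1 \cap F_2| + |F_1 \cap A| + |F_2 \cap A| = \ell + (k-\ell) + 0 = k$, yielding the required $3$-cluster.

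The main obstacle is guaranteeing that this ideal pair---or its $B$-symmetric analogue---always exists. I would run a case analysis according to which of the subfamilies $\{F \ni C : F \setminus C \subset A\}$, $\{F \ni C : F \setminus C \subset B\}$, $\{F \ni C : F \cap A = \emptyset\}$, $\{F \ni C : F \cap B = \emptyset\}$ is empty. In each obstructed pattern (for example every $F \ni C$ meets both $A$ and $B$, or there is no $F \in \mathcal{F}$ containing $C$ at all), I would either swap the roles of $A$ and $B$, replace $A$ by another edge of $\mathcal{F}$ disjoint from $C$, or combine two edges containing $C$ with suitably disjoint $A$- or $B$-intersections---each time exploiting the slack $k - \ell \ge c$ to keep the union size at most $2k$. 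Pulling off this case analysis, and in particular producing a $3$-cluster in the stubborn pattern where every edge containing $C$ meets $A$ and $B$ in exactly one element each, is the technical crux of the proof.
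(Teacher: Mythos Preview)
Your approach has a genuine gap: the claim cannot be proved by exhibiting a $3$-cluster, because the claim is \emph{not} a standalone consequence of $\mathcal{F}$ being $(3,2k)$-conditionally intersecting. Consider the trivial instance $\mathcal{F}=\{A,B\}$ (which is $(3,2k)$-conditionally intersecting, nonintersecting, and has every $(k-1)$-subset unique). Here $U=[n]-(A\cup B)$ contains no perfect $(k-1)$-set at all, so \emph{every} $C\subset U$ of size at most $k-c$ is a counterexample to the claim---yet there are only two edges and no $3$-cluster anywhere. Your case analysis explicitly lists ``there is no $F\in\mathcal{F}$ containing $C$ at all'' as an obstructed pattern, but the proposed remedies (swap $A$ and $B$, replace $A$ by another edge disjoint from $C$, combine two edges through $C$) all presuppose edges that need not exist; in the example above there are none.

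What the paper actually does is different in kind. The claim is established only in the ``without loss of generality'' sense inside the proof of the theorem: if some $C$ of size $\ell\le k-c$ is not contained in any perfect $(k-1)$-set, then every $(k-1)$-superset of $C$ inside $U$ is non-perfect, forcing
\[
n_1 \;\ge\; \binom{n-2k-\ell}{k-\ell-1}\Big/\binom{k-1}{\ell}.
\]
For $n\ge 3k\binom{2k}{k}$ this is large enough that $n_1/4$ dominates $\sum_{i=c+2}^{k-1}|\mathcal{F}_i|$, and combining with Claim~\ref{lemma:inequality-sum-F-i-to-c+1} yields $\sum_{i=0}^{k-1}|\mathcal{F}_i|<\sum_{i=1}^{k-1}w_i$, i.e.\ the main inequality~(\ref{inequality-main-goal}) already holds and the theorem is proved. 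So the ``proof'' of the claim is really a counting argument that either the claim holds or the theorem is finished---not a direct structural contradiction of the sort you are attempting.
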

\begin{proof}[Proof of Claim \ref{claim-C-is-contained-in-perfec-set}]
Let $C\subset U$ be a set of size $\ell\le k-c$.
Suppose that $C$ is not contained in any perfect $(k-1)$-set.
Then for every $S\in \binom{U-C}{k-\ell-1}$ the set $C\cup S$ is non-perfect and of size $k-1$.
Therefore, we have $n_1\ge \binom{n-2k-\ell}{k-\ell-1}/\binom{k-1}{\ell}\ge \binom{n-2k-\ell}{c-1}/\binom{k-1}{\ell}$.
On the other hand, we have  $\sum_{i=c+2}^{k-1}|\mathcal{F}_i|\le \sum_{i=c+2}^{k-1}\binom{2k}{i}\binom{n-2k}{k-i}$.
Since $n\ge 3k\binom{2k}{k}$, $n_1/4> \sum_{i=c+2}^{k-1}|\mathcal{F}_i|$.
Therefore,
by Claim \ref{lemma:inequality-sum-F-i-to-c+1},
\begin{align}
\sum_{i=0}^{k-1}|\mathcal{F}_i|
=\sum_{i=1}^{c+1}|\mathcal{F}_i|
+\sum_{i=c+2}^{k-1}|\mathcal{F}_i|
\le \sum_{i=1}^{c+1}w_i-\sum_{i=1}^{c+1}\frac{n_i}{4}
+\sum_{i=c+2}^{k-1}\binom{2k}{i}\binom{n-2k}{k-c-2}
<\sum_{i=1}^{k-1}w_i, \notag
\end{align}
and we are done.
So we may assume that $C$ is  contained in a perfect $(k-1)$-set.
\end{proof}

\begin{claim}\label{claim-Fi-inequality-for-all-i}
The inequality $|\mathcal{F}_i|\le w_i+t_ib_i-n_i$ holds for all $i\ge c+1$.
\end{claim}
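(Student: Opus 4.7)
The plan is to mimic the proof of Claim~\ref{lemma:upper-bound-Fi-less-k+1/2} by splitting $|\mathcal{F}_i|=\sum_{C\in\binom{U}{k-i}}|\mathcal{F}(C)|$ according to whether $C\in\mathcal{P}_i$, $C\in\mathcal{G}_i$, or $C\in\mathcal{B}_i$, and then performing the same bookkeeping that packaged the three piecewise bounds into $w_i+k_ib_i-n_i$. (I take $t_i$ in the statement to be a typo for $k_i=\binom{2k}{i}-\binom{k-1}{i-1}+1$, matching the notation of Claim~\ref{lemma:upper-bound-Fi-less-k+1/2}.) The only ingredient that does not transfer verbatim is the bound on the contribution from $\mathcal{G}_i$: for $i\ge c+1$ the parent size $k-i+1$ can drop below $(k+1)/2$, so Claim~\ref{lemma:l-perfect-star-to-l-1-star} is no longer applicable. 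This is precisely where Claim~\ref{claim-C-is-contained-in-perfec-set} is designed to intervene, enabling Claim~\ref{lemma:C-P-perfect-set-implies-star} as a substitute.

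The key step is the following. Fix $i\ge c+1$ and $C\in\mathcal{G}_i$; by definition there is a perfect $D\in\binom{U}{k-i+1}$ containing $C$, so $\mathcal{F}(D)$ is a full star on $A$ or $B$ with some core $v$. Since $|D|=k-i+1\le k-c=m$, Claim~\ref{claim-C-is-contained-in-perfec-set} supplies a perfect $(k-1)$-set containing $D$. That is exactly the hypothesis required to invoke Claim~\ref{lemma:C-P-perfect-set-implies-star} with $D$ playing the role of ``$C$'' and our $C$ playing the role of ``$C'$'', yielding that $\mathcal{F}(C)$ is itself a star with core $v$. But $C\in\mathcal{N}_i$ is non-perfect, so $\mathcal{F}(C)$ cannot be a full star, and thus $|\mathcal{F}(C)|\le\binom{k-1}{i-1}-1$.

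With the $\mathcal{G}_i$ bound in place the rest is routine. Combining $|\mathcal{F}(C)|=\binom{k-1}{i-1}$ for $C\in\mathcal{P}_i$, the bound just established for $C\in\mathcal{G}_i$, and the trivial $|\mathcal{F}(C)|\le\binom{2k}{i}$ for $C\in\mathcal{B}_i$, and then using $p_i+g_i+b_i=\binom{n-2k}{k-i}$ and $g_i+b_i=n_i$, collapses the three sums to $w_i+k_ib_i-n_i$ by the same algebra as in Claim~\ref{lemma:upper-bound-Fi-less-k+1/2}. The only real obstacle is checking the size inequality $k-i+1\le m$ for all $i\ge c+1$ (so that Claim~\ref{claim-C-is-contained-in-perfec-set} is genuinely applicable to the parent $D$); once that is verified, no new counting work is needed.
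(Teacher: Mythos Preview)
Your proposal is correct and follows exactly the paper's own argument: the paper also splits $|\mathcal{F}_i|$ over $\mathcal{P}_i,\mathcal{G}_i,\mathcal{B}_i$, invokes Claim~\ref{claim-C-is-contained-in-perfec-set} (applicable because $|D|=k-i+1\le k-c$) together with Claim~\ref{lemma:C-P-perfect-set-implies-star} to obtain $|\mathcal{F}(C)|\le\binom{k-1}{i-1}-1$ for $C\in\mathcal{G}_i$, and then repeats the algebra of Claim~\ref{lemma:upper-bound-Fi-less-k+1/2}. Your reading that $t_i$ should be $k_i$ is also the intended one.
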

\begin{proof}
By Claim \ref{claim-C-is-contained-in-perfec-set},
every set $C\subset U$ of size at most $k-c$ is contained in a perfect $(k-1)$-set.
Therefore, by Claim \ref{lemma:C-P-perfect-set-implies-star},
\begin{align}
|\mathcal{F}_i|
& =\sum_{C\in \mathcal{P}_{i}}|\mathcal{F}(C)|+\sum_{C\in \mathcal{G}_{i}}|\mathcal{F}(C)|+\sum_{C\in \mathcal{B}_{i}}|\mathcal{F}(C)| \notag\\
&\le \binom{k-1}{i-1}p_i+\left(\binom{k-1}{i-1}-1\right)g_i+\binom{2k}{i}b_i. \notag
\end{align}
\end{proof}

By Claims \ref{lemma:ni-bi+1}, \ref{lemma:upper-bound-Fi-less-k+1/2}, and \ref{claim-Fi-inequality-for-all-i},
\begin{align}
\sum_{i=0}^{k-1}|\mathcal{F}_i|
& \le \sum_{i=1}^{k-1}(w_i+t_ib_i-n_i)=\sum_{i=1}^{k-1}w_i-\sum_{i=1}^{k-2}(n_i-t_{i+1}b_{i+1})-n_{k-1} \notag\\
&\le \sum_{i=1}^{k-1}w_i-\sum_{i=1}^{k-1}\frac{n_i}{2}, \notag
\end{align}
which proves $(\ref{inequality-main-goal})$,
and equality holds if and only if $C$ is perfect for every $C\in \binom{U}{i}$ and for every $i\in[k-1]$,
which implies that $\mathcal{F}$ is the disjoint union of a $k$-set and a full star.
\end{proof}


\section{Acknowledgement}
We are very grateful to Dhruv Mubayi for his guidance, expertise, fruitful discussions that greatly assisted this research,
and suggestions that greatly improved the presentation of this paper.
We are also very grateful to the referee for a careful reading of this manuscript and several helpful suggestions.
\bibliographystyle{abbrv}
\bibliography{structureintersecting}
\end{document}